\newcommand{\norm}[1]{\left\Vert#1\right\Vert}
\newcommand{\abs}[1]{\left\vert#1\right\vert}
\newcommand{\RR}{\mathbb R}
\theoremstyle{definition}
\newtheorem{definition}{Definition}[section]
\newtheorem{remark}{Remark}[section]
\newtheorem{lemma}{Lemma}[section]
\newtheorem{theorem}{Theorem}[section]
\newtheorem{proposition}{Proposition}[section]
\newtheorem{corollary}[theorem]{Corollary}
\newtheorem{example}[theorem]{Example}
\begin{document}
	
	\title{\bf A New Class of Fuzzy Inner Products and Fuzzy Norms over Ordered Intervals}
	
\author[1]{Bayaz Daraby}
\ead{daraby@maragheh.ac.ir}

\author[1]{Asghar Rahimi}
\ead{rahimi@maragheh.ac.ir}
\author[1]{Hasan Haddadzadeh\corref{cor1}}
\ead{Mh.hadadzadeh@stu.maragheh.ac.ir}
\ead{m_h_haddad@yahoo.com}

\cortext[cor1]{Corresponding author}

\address[1]{Department of Mathematics, University of Maragheh, 
	P. O. Box 55136-553, Maragheh, Iran}

	\maketitle

		\allowdisplaybreaks
	\begin{abstract}
	
		In this article, we first define the concept of ordered intervals, then introduce  ordered fuzzy inner product and describe some of its properties.
	\end{abstract}
	
	\begin{footnotesize}
		Mathematics Subject Classification:46S40, 03E72, 46B40\\
		Keywords: Ordered interval; Fuzzy inner product; Fuzzy norm
	\end{footnotesize}
		\section{Introduction}
Fuzzy inner product spaces were first introduced by Biswas~\cite{Biswas1991} 
and subsequently developed by Kohli~\cite{Kohli1993}, 
Hasankhani~\cite{Hasankhani2010}, 
and later by Daraby et al.~\cite{Daraby2010Fuzzy,Daraby2019Some}.

In recent years, this line of research has undergone significant advancement. 
A comprehensive review together with a new formulation was provided by 
Popa and Si\c{d}a~\cite{PopaSida2021}, 
while additional structural properties and extensions were investigated in 
Kider~\cite{Kider2021}. 
More recently, further examples, fundamental characteristics, and applications of 
fuzzy inner product spaces have been explored by Xiao~\cite{Xiao2023}.

Despite more than three decades of intensive research, a fundamental obstacle has 
remained unresolved: under all previously proposed definitions that preserve reasonable 
linearity and positivity properties, either essential geometric inequalities 
(such as the Cauchy–Schwarz inequality, the parallelogram law, or Bessel’s inequality) 
are lost, or the inner product is forced to take crisp real values whenever structural 
consistency is required. This severe limitation was rigorously established by 
Byun et al.~\cite{Byun2020} and has long constituted the primary bottleneck 
in developing genuine fuzzy Hilbert space structures.

The construction of fully functional linear structures in interval and fuzzy settings 
has been hindered by both conceptual and technical difficulties, most notably the 
inherent conflict between the imprecision of fuzzy quantities and the rigidity of 
classical linear algebraic axioms, as noted by 
Siminski~\cite{Siminski2025Fuzzy}.
 Classical functional analysis tools are therefore inadequate for fuzzy linear spaces.
Further complications arise from the standard arithmetic operations on intervals. The widely used definitions

	\[
	[a,b]\;\widehat{\oplus}\;[c,d]=[a+c,b+d],
	\qquad
	[a,b]\;\widehat{\ominus}\;[c,d]=[a-d,b-c],
	\]
	lack rigorous justification and lead to inconsistencies; for instance,
	\[
	\bigl([a,b]\;\widehat{\oplus}\;[c,d]\bigr)\;\widehat{\ominus}\;[c,d]
	=[a,b]
	\]
	holds only when the intervals degenerate to crisp numbers. Nevertheless, Hassankhani’s formulation preserves linearity and provides a consistent extension of the classical inner product to fuzzy vector spaces.
	Motivated by these observations, the present paper introduces a refined definition of the fuzzy inner product based on the notion of \emph{ordered intervals}. This generalization relaxes unnecessary ordering constraints, preserves addition and subtraction, and removes the requirement that fuzzy inner product values be crisp. \\ 
	Let $a,b\in\mathbb{R}$. We define a \emph{ordered interval} by
	\[
	[a,b]_o=
	\begin{cases}
		\{x\mid x\in[a,b]\}, & a\le b,\\[2mm]
		\{x\mid x\in[b,a]\}, & \text{otherwise}.
	\end{cases}
	\]
	The set of all such ordered intervals is denoted by
	\[
	O(\mathbb{R})=\{[a,b]_o\mid a,b\in\mathbb{R}\}.
	\]
	For $[a,b]_o,[c,d]_o\in O(\mathbb{R})$ and $k\in\mathbb{R}$, define
	\begin{align*}
		[ a,b]_o\oplus[c,d]_o &= [a+c,\,b+d]_o,\\
		[ a,b]_o\ominus[c,d]_o &= [a-c,\,b-d]_o,\\
		[ a,b]_o\odot[c,d]_o &= [ac,\,bd]_o,\\
		k[a,b]_o &= [ka,kb]_o.
	\end{align*}
	These operations allow unordered endpoints while preserving the structural properties required for fuzzy arithmetic.\\
	In this work, we introduce a new definition of a \emph{fuzzy inner product} that overcomes these limitations while retaining essential vector-space properties. Let $X$ be a vector space over $\mathbb{C}$
	 and let $\mathcal{K}:\mathbb{R}\to[0,1]$ be a fuzzy number. Assume the existence of constants $A_\alpha,B_\alpha$ satisfying
	\[
	0<\min\{A_\alpha,B_\alpha\}\le \max\{A_\alpha,B_\alpha\}<\infty.
	\]
For every $\alpha \in (0,1]$, a mapping
\[
\langle \cdot, \cdot \rangle_\alpha : (0,1] \times X \times X \to \mathbb{C}
\]
is called a \emph{fuzzy inner product} on $X$ if there exist classical inner products $\langle \cdot, \cdot \rangle'$ and $\langle \cdot, \cdot \rangle''$ on $X$ such that, for all $x, y, z \in X$  and all $\alpha \in (0,1]$, the following conditions hold:

	\begin{equation*}
		\mathcal{K}\bigl(\big|\langle  x, y \rangle_\alpha \big|\bigr)\ge\alpha
		\quad\Longleftrightarrow\quad
		\big|\langle  x, y \rangle_\alpha \big|\in
		[A_\alpha|\langle x,y\rangle'|,\;B_\alpha|\langle x,y\rangle''|]_o.
	\end{equation*}
	The use of ordered intervals yields a streamlined and structurally natural framework that preserves the characteristic features of fuzzy inner products and introduces a graded structure indexed by $\alpha$.\\
	To facilitate the examination of the forthcoming challenges, and assuming throughout that 
	$ 0<A_\alpha\le B_\alpha<\infty$, we rewrite the above formula in the following form.
	\begin{equation*}
		\mathcal{K}\bigl(\big|\langle  x, y \rangle_\alpha \big|\bigr)\ge\alpha
	\quad\Longleftrightarrow\quad	A_\alpha|\langle x,y\rangle'|
		\le \big|\langle  x, y \rangle_\alpha \big|
		\le B_\alpha|\langle x,y\rangle'|.
	\end{equation*}
	An analogous construction applies to fuzzy norms. Let $C_\alpha,D_\alpha$ satisfy
	\[
	0<\min\{C_\alpha,D_\alpha\}\le\max\{C_\alpha,D_\alpha\}<\infty.
	\]
	For every $\alpha\in (0,1]$, a mapping
	\[
	\norm{.}_\alpha:(0,1]\times X\to\mathbb{C}
	\]
	is called a \emph{fuzzy norm} if there exist norms $\|\cdot\|'$ and $\|\cdot\|''$ on $X$ such that
	\begin{equation*}
		\mathcal{K}\big(\big|\norm{x}_\alpha \big|\big)\ge\alpha
		\quad\Longleftrightarrow\quad
		\big|\|x\|_\alpha\big|\in[C_\alpha\|x\|',\,D_\alpha\|x\|'']_o.
	\end{equation*}
	This formulation naturally generalizes classical norms and avoids restrictive ordering assumptions on the interval bounds.
	Under these assumptions on the inner product, most classical structural properties are recovered; in particular, orthogonality is preserved immediately.
	We obtain fuzzy analogues of classical inequalities. For all $x,y\in X$ and $\alpha\in(0,1]$,
	\begin{equation*}
		|\langle x,y\rangle_\alpha|^2
		\le \left(\frac{B_\alpha}{A_\alpha}\right)^2\big\vert\|x\|_\alpha\|y\|_\alpha\big\vert^2,
	\end{equation*}
	and for any orthonormal system $\{e_i\}^\infty_{i=1}$,
	\begin{equation*}
		\sum_{i=1}^{\infty}|\langle x,e_i\rangle_\alpha|^2
		\le \left(\frac{B_\alpha}{A_\alpha}\right)^2\big\vert\|x\|_\alpha\big\vert^2.
	\end{equation*}
	In this work, we introduce a new class of fuzzy inner products that generalize the classical inner product structure, retaining its fundamental analytic features while naturally accommodating nonlinearity and fuzziness.  The proposed fuzzy inner products, analogous to semi-inner products in nonlinear functional analysis, 
	provide a robust and versatile framework. They allow the definition of associated fuzzy norms and 
	offer a solid foundation for the study and development of fuzzy Banach spaces and fuzzy operator theory. 
	In other words, the proposed fuzzy inner products and norms serve as a comprehensive and flexible toolset, 
	enabling the generalization of many classical results from Hilbert space theory and operator analysis 
	to fuzzy settings.
\subsection*{Why Fuzzy Norms Are Not Uncertainty Models Here}
In this paper, fuzziness is not interpreted as epistemic uncertainty, degrees of truth, or imprecision to be reduced or averaged. Instead, fuzzy norms and fuzzy inner products are employed as mechanisms for enriching the underlying geometric--functional structure. The fuzzy values introduced here do not relax classical constraints; rather, they encode additional layers of geometric information that are inaccessible at the classical level.
This perspective fundamentally distinguishes the present framework from earlier approaches, including those of Felbin and Bag--Samanta. In such settings, fuzziness is typically tied to specific $t$-norms and serves as a soft extension of classical linear structures, often leading either to crisp collapse or to the loss of essential geometric properties. In contrast, the proposed framework deliberately avoids fixing any particular $t$-norm. This choice is not merely technical but conceptual: prescribing a $t$-norm imposes an \emph{a priori} aggregation rule that restricts admissible geometric behavior and drives the theory toward quasi-crisp realizations.\\
A key conceptual departure of the present model is that classical linearity is replaced by a two-sided fuzzy boundedness condition. Instead of exact linear relations, operators and functionals are required to satisfy controlled upper and lower fuzzy bounds, which recover classical linearity as a special case. This replacement preserves essential geometric structures while allowing genuinely fuzzy realizations that are not accessible within the classical linear framework.\\
Accordingly, in the proposed setting, fuzziness is neither a relaxation of structure nor a surrogate for uncertainty. Classical notions such as orthogonality remain meaningful at the structural level, while admitting multiple controlled fuzzy realizations. As a result, the classical inner product and norm appear as canonical representatives within an equivalence class of fuzzy structures, rather than as endpoints of a limiting or defuzzification process.

\section{Preliminaries}
In this section, we present several definitions and preliminary results that will be used throughout the paper.\\
A \textit{fuzzy number} is a function $u:\mathbb{R}\to[0,1]$. For each fuzzy set $u$, its $\alpha$-level set is defined by
\[
[u]_{\alpha}=\{x\in\mathbb{R}:\, u(x)\ge \alpha\}, \qquad \alpha\in(0,1].
\]
\begin{definition}[\citealp{RynneYoungson2008}]\label{def:inner-product}
	Let $V$ be a vector space over $\mathbb{C}$. A function
	\[
	\langle \cdot , \cdot \rangle : V \times V \to \mathbb{C}
	\]
	is called an \emph{inner product} if for all $x,y,z\in V$ and $r\in\mathbb{C}$:
	\begin{enumerate}[label=(\roman*)]
		\item $\langle x+y,z\rangle = \langle x,z\rangle + \langle y,z\rangle$;
		\item $\langle rx,y\rangle = r \langle x,y\rangle$;
		\item $\langle x,y\rangle = \overline{\langle y,x\rangle}$; 
		\item $\langle x,x\rangle \ge 0$, and $\langle x,x\rangle=0$ iff $x=0$.
	\end{enumerate}
	A vector space $V$ equipped with an inner product is called an \emph{inner product space}.
\end{definition}

\begin{lemma}[Bessel's Inequality, \cite{RynneYoungson2008}]
	Let $X$ be an inner product space, and let $\{e_i\}^\infty_{i=1}$ be an orthonormal sequence in $X$. Then, for any $x\in X$,
	\begin{equation}\label{ineq:Bessel's Inequality}
	\sum_{i=1}^{\infty} |\langle x,e_i\rangle|^2\le \|x\|^{2}.		
	\end{equation}
\end{lemma}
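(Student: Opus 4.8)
The plan is to establish the inequality first for finite partial sums and then pass to the limit by a monotone-boundedness argument. First I would fix an arbitrary $n\in\mathbb{N}$ and introduce the finite partial sum (the orthogonal projection of $x$ onto the span of $e_1,\dots,e_n$),
\[
s_n=\sum_{i=1}^{n}\langle x,e_i\rangle\,e_i.
\]
The governing idea is to exploit positivity of the norm from axiom (iv): since $\|x-s_n\|^2=\langle x-s_n,\,x-s_n\rangle\ge 0$, expanding this quantity and simplifying will produce exactly the partial-sum bound.

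Next I would expand the inner product using additivity and homogeneity (linear in the first argument and conjugate-linear in the second, as dictated by axioms (i)–(iii)):
\[
\|x-s_n\|^2=\langle x,x\rangle-\langle x,s_n\rangle-\langle s_n,x\rangle+\langle s_n,s_n\rangle.
\]
The central computation is to evaluate the three nontrivial terms using orthonormality $\langle e_i,e_j\rangle=\delta_{ij}$. I expect each of $\langle x,s_n\rangle$, $\langle s_n,x\rangle$, and $\langle s_n,s_n\rangle$ to collapse to $\sum_{i=1}^{n}|\langle x,e_i\rangle|^2$, so that after substitution two of the cross terms and the self term combine to leave a single subtracted sum. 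Substituting yields
\[
0\le\|x-s_n\|^2=\|x\|^2-\sum_{i=1}^{n}|\langle x,e_i\rangle|^2,
\]
and hence $\sum_{i=1}^{n}|\langle x,e_i\rangle|^2\le\|x\|^2$ for every $n$.

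Finally, I would pass to the limit: the partial sums $\sum_{i=1}^{n}|\langle x,e_i\rangle|^2$ form a nondecreasing sequence of nonnegative reals bounded above by the fixed constant $\|x\|^2$, so the series converges and its limit inherits the same bound, giving \eqref{ineq:Bessel's Inequality}. The only step demanding genuine care is the bookkeeping of complex conjugates in the expansion: one must track the conjugate-linearity convention precisely so that $\langle x,s_n\rangle$ and $\langle s_n,x\rangle$ are seen to be the (real, mutually equal) sums $\sum_{i=1}^{n}|\langle x,e_i\rangle|^2$ rather than anything conjugated. Everything else—the orthonormality reductions and the limiting argument—is routine.
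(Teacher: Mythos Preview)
Your argument is correct and is precisely the standard textbook proof of Bessel's inequality: expand $\|x-s_n\|^2\ge 0$ with $s_n=\sum_{i=1}^n\langle x,e_i\rangle e_i$, use orthonormality to reduce the cross terms and the self term each to $\sum_{i=1}^n|\langle x,e_i\rangle|^2$, obtain the finite-$n$ bound, and let $n\to\infty$ by monotone convergence. The paper itself does not supply a proof of this lemma---it is quoted in the Preliminaries section from the reference \cite{RynneYoungson2008}---so there is no alternative approach in the paper to compare against; your write-up is exactly the argument one finds in that cited source.
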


\begin{proposition}[Parallelgram Rule,  \cite{RynneYoungson2008}]\label{prop:Parallelgram Rule}
	Let $X$ be an inner product space with the norm induced by $\langle\cdot,\cdot\rangle$. Then, for all $x,y\in X$,
	\[
	\|x+y\|^2+\|x-y\|^2 = 2\|x\|^2 + 2\|y\|^2.
	\]
	This identity characterizes norms that arise from an inner product.
\end{proposition}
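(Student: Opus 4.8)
The plan is to prove the identity by direct expansion, using only the defining property $\|v\|^2=\langle v,v\rangle$ of the norm induced by the inner product together with the sesquilinearity and conjugate symmetry recorded in Definition~\ref{def:inner-product}. Since $X$ is a complex space, the first preparatory step is to confirm how the inner product behaves in its second argument, because Definition~\ref{def:inner-product} only states additivity in the first slot. Combining axioms (i) and (iii), I would record that for all $x,y,z\in X$,
\[
\langle z,x+y\rangle=\overline{\langle x+y,z\rangle}=\overline{\langle x,z\rangle}+\overline{\langle y,z\rangle}=\langle z,x\rangle+\langle z,y\rangle,
\]
so that the inner product is additive in each argument.

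With additivity in both slots in hand, the core of the argument is to expand the two squared norms on the left-hand side. I would write
\[
\|x+y\|^2=\langle x+y,x+y\rangle=\langle x,x\rangle+\langle x,y\rangle+\langle y,x\rangle+\langle y,y\rangle,
\]
and, identically but with the sign of $y$ reversed,
\[
\|x-y\|^2=\langle x-y,x-y\rangle=\langle x,x\rangle-\langle x,y\rangle-\langle y,x\rangle+\langle y,y\rangle.
\]
Adding these two expressions, the cross terms $\pm\bigl(\langle x,y\rangle+\langle y,x\rangle\bigr)$ cancel, leaving exactly $2\langle x,x\rangle+2\langle y,y\rangle=2\|x\|^2+2\|y\|^2$, which is the claimed equality.

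There is no substantial obstacle here; the statement is a routine consequence of additivity in both arguments, and the only point demanding mild care is the complex setting. Specifically, I must invoke conjugate symmetry to justify the additivity used above, and I should note that the cancelling cross term equals $\langle x,y\rangle+\overline{\langle x,y\rangle}=2\operatorname{Re}\langle x,y\rangle$; however, since this quantity appears with opposite signs in the two expansions, it vanishes upon addition regardless of whether one works over $\mathbb{R}$ or $\mathbb{C}$, so no separate reality argument is required. The converse characterization asserted in the statement—that this identity singles out norms coming from an inner product—is standard and would be treated by the polarization identity, but it is not needed for the displayed equality itself.
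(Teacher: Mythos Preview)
Your proof is correct and is exactly the standard direct-expansion argument: expand $\langle x\pm y,\,x\pm y\rangle$ using additivity in each slot (the latter justified via conjugate symmetry), and observe that the cross terms cancel upon addition. The paper itself does not supply a proof of this proposition; it is listed among the preliminaries and simply cited from \cite{RynneYoungson2008}, so there is nothing to compare against beyond noting that your argument is the textbook one.
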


\begin{theorem}[Cauchy--Schwarz Inequality,  \cite{RynneYoungson2008}]\label{Cauchy--Schwarz Inequality}
	Let $X$ be an inner product space. Then, for all $x,y\in X$,
	\[
	|\langle x,y\rangle| \le \|x\|\,\|y\|,
\]
where $\|x\| = \sqrt{\langle x,x\rangle}$ is the induced norm.

\end{theorem}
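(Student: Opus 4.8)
The plan is to derive the inequality directly from the positivity axiom (iv) of Definition~\ref{def:inner-product}, applied to a judiciously chosen vector. First I would dispose of the degenerate case $y=0$: by homogeneity (axiom (ii) with $r=0$) we have $\langle x,0\rangle=0$, so both sides vanish and the inequality holds trivially. Assume henceforth that $y\neq 0$, whence $\langle y,y\rangle>0$ by axiom (iv).

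The core idea is to exploit that $\langle x-\lambda y,\,x-\lambda y\rangle\ge 0$ for every scalar $\lambda\in\mathbb{C}$, and to expand this quantity using the inner-product axioms. Linearity in the first slot (axioms (i)--(ii)) together with conjugate symmetry (axiom (iii)) forces conjugate-linearity in the second slot, since $\langle x,ry\rangle=\overline{\langle ry,x\rangle}=\bar r\,\langle x,y\rangle$. Carrying out the expansion gives $\langle x,x\rangle-\bar\lambda\,\langle x,y\rangle-\lambda\,\langle y,x\rangle+\abs{\lambda}^2\langle y,y\rangle\ge 0$.

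The decisive step is to choose $\lambda=\langle x,y\rangle/\langle y,y\rangle$, which is the value minimizing the expanded expression. Substituting this $\lambda$ and using both $\langle y,x\rangle=\overline{\langle x,y\rangle}$ and the fact that $\langle y,y\rangle$ is a positive real, the two cross terms and the quadratic term each reduce to $\abs{\langle x,y\rangle}^2/\langle y,y\rangle$, so that the three of them collapse to a single such term. What remains is $\langle x,x\rangle-\abs{\langle x,y\rangle}^2/\langle y,y\rangle\ge 0$. Rearranging yields $\abs{\langle x,y\rangle}^2\le\langle x,x\rangle\,\langle y,y\rangle=\norm{x}^2\norm{y}^2$, and taking nonnegative square roots delivers the claim.

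The main obstacle here is bookkeeping rather than genuine difficulty: because the inner product is only conjugate-linear in its second argument, one must track the complex conjugates carefully throughout the expansion, and sign or conjugation errors are the easiest mistakes to make. Once the correct choice of $\lambda$ is made, the collapse of the three terms is purely mechanical, and no further machinery beyond the four defining axioms is required.
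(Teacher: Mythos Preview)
Your argument is correct and is precisely the standard textbook proof of the Cauchy--Schwarz inequality. The paper itself does not supply a proof of this statement: Theorem~\ref{Cauchy--Schwarz Inequality} is quoted in the preliminaries section as a known result from \cite{RynneYoungson2008}, so there is no in-paper proof to compare against, but what you have written matches the classical derivation one would find in that reference.
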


\begin{theorem}\label{thm:PolarizationIdentity}[Polarization Identity (Real Case), \cite{RynneYoungson2008}]
Let $X$ be a \emph{real} inner product space with inner product $\langle \cdot, \cdot \rangle$.  
Then, for all $x, y \in X$, the inner product can be recovered from the norm via
\[
\langle x, y \rangle = \frac{1}{4} \Big( 
\|x+y\|^2 - \|x-y\|^2 
\Big),
\]
where $\|\cdot\|$ denotes the norm induced by the inner product.
\end{theorem}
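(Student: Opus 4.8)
The plan is to expand both squared norms using the definition of the induced norm, $\norm{z}^2 = \langle z, z\rangle$, together with the additivity of the inner product in each argument, and then to exploit symmetry so that the cross terms survive while the diagonal terms cancel. First I would write $\norm{x+y}^2 = \langle x+y,\,x+y\rangle$ and expand via condition (i) of Definition~\ref{def:inner-product} (applied in both slots) to obtain $\langle x,x\rangle + \langle x,y\rangle + \langle y,x\rangle + \langle y,y\rangle$. The completely analogous expansion of $\norm{x-y}^2$ produces $\langle x,x\rangle - \langle x,y\rangle - \langle y,x\rangle + \langle y,y\rangle$.

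The key step is to invoke the \emph{real} hypothesis on $X$. In a real inner product space, conjugate symmetry (condition (iii)) collapses to ordinary symmetry $\langle x,y\rangle = \langle y,x\rangle$, so the two mixed terms in each expansion coincide. Consequently the first expansion becomes $\norm{x}^2 + 2\langle x,y\rangle + \norm{y}^2$ and the second becomes $\norm{x}^2 - 2\langle x,y\rangle + \norm{y}^2$, using $\langle x,x\rangle = \norm{x}^2$ and $\langle y,y\rangle = \norm{y}^2$.

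Finally, subtracting the second identity from the first cancels the $\norm{x}^2$ and $\norm{y}^2$ contributions and leaves $\norm{x+y}^2 - \norm{x-y}^2 = 4\langle x,y\rangle$; dividing by $4$ gives the stated formula. I do not expect any genuine obstacle in this argument: the only point requiring attention is the appeal to real symmetry, which is exactly what lets the two cross terms merge into $2\langle x,y\rangle$ rather than persisting as $\langle x,y\rangle + \overline{\langle x,y\rangle}$. This same observation explains why the theorem is restricted to real spaces, since in the complex setting the identical computation recovers only $\operatorname{Re}\langle x,y\rangle$, and a genuinely different (four-term) polarization formula would be required to recover the full inner product.
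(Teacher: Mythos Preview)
Your argument is correct and is exactly the standard proof of the real polarization identity. The paper does not supply its own proof of this statement: Theorem~\ref{thm:PolarizationIdentity} is stated in the preliminaries and attributed to \cite{RynneYoungson2008} without further justification, so there is nothing to compare against beyond noting that your expansion-and-cancellation argument is the textbook one.
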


\begin{definition}[\cite{Foralewski2024}]
	Let $X$ be a real vector space. A mapping $\|\cdot\|: X \to [0,\infty)$ is called a \emph{quasinorm} if it satisfies the following conditions:
	\begin{enumerate}[label=(\alph*)]
		\item $\|f\| = 0$ if and only if $f = 0$;
		\item $\|a f\| = |a|\,\|f\|$ for all $f \in X$ and all $a \in \mathbb{R}$;
		\item there exists a constant $M > 0$ such that
		\[
		\|f + g\| \leq M \bigl( \|f\| + \|g\| \bigr)
		\quad \text{for all } f,g \in X.
		\]
	\end{enumerate}
	If $M = 1$, then $\|\cdot\|$ is called a \emph{norm}.
\end{definition}

	\section{ Ordered Interval}
First, we extend the concept of intervals to fuzzy intervals.
Recall that a standard interval is defined as $[a,b]=\{x\vert a\leq x\leq b\}$.
However, in the context of fuzzy set theory, certain algebraic or arithmetic operations---such as difference computations or generalized interval manipulations---may give rise to situations in which the natural ordering between the interval endpoints is not preserved. 
In such cases, it is often desirable to consider all elements within the interval $[a, b]$ while retaining the semantic roles of the endpoints, regardless of their order. 
To accommodate this broader perspective, we extend the notion of an interval by removing the restrictive assumption $a \leq b$, thereby allowing a more flexible representation that is suitable for fuzzy and interval-valued frameworks.
\begin{definition}\label{def:Orderedinterval}
	Let $a, b \in \mathbb{R}$. We define the set $[a, b]_o$ as follows:
	\begin{equation}
		[a,b]_o=\begin{cases}
			\{x\vert x\in [a,b]\}\qquad a\leq b\\\{x\vert x\in[b,a]\}\qquad otherwise.
		\end{cases}
	\end{equation}
	
	We call $[a,b]_o$ a \emph{ordered interval}. 
	The collection of all ordered intervals is denoted by $O(\mathbb{R})$.
	Formally, 
	\[
	O(\mathbb{R}) = \{ [a, b]_o \mid a, b \in \mathbb{R} \}.
	\]
	\begin{lemma}
	Any ordered interval can be rewritten as follows:
	
	\begin{equation}\label{eq:Transformationintervalsorderedintervals}
		[a,b]_o=[\min\{a,b\},\max\{a,b\}]
	\end{equation}
			\end{lemma}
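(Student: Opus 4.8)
The plan is to prove the identity by direct case analysis on the relative order of the endpoints $a$ and $b$, since both the definition of $[a,b]_o$ and the functions $\min$ and $\max$ are defined by exactly the same dichotomy. This alignment is what makes the two expressions coincide, so the entire argument reduces to unfolding the two definitions in each case and observing that they produce the same ordinary interval.

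First I would treat the case $a \le b$. Here the definition of the ordered interval gives $[a,b]_o = \{x \mid x \in [a,b]\} = [a,b]$. On the other hand, $a \le b$ forces $\min\{a,b\} = a$ and $\max\{a,b\} = b$, so the right-hand side of \eqref{eq:Transformationintervalsorderedintervals} is $[\min\{a,b\},\max\{a,b\}] = [a,b]$ as well. The two sides therefore agree.

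Next I would treat the remaining case $a > b$, which is precisely the ``otherwise'' branch of Definition~\ref{def:Orderedinterval}. In this case the ordered interval is $[a,b]_o = \{x \mid x \in [b,a]\} = [b,a]$, while $a > b$ gives $\min\{a,b\} = b$ and $\max\{a,b\} = a$, so the right-hand side becomes $[\min\{a,b\},\max\{a,b\}] = [b,a]$. Again the two sides coincide.

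Since the two cases $a \le b$ and $a > b$ are mutually exclusive and exhaustive, the identity $[a,b]_o = [\min\{a,b\},\max\{a,b\}]$ holds for all $a,b \in \mathbb{R}$. I do not anticipate any genuine obstacle here: the only point requiring care is to ensure the case split in the proof matches the case split ($a \le b$ versus its negation) used in Definition~\ref{def:Orderedinterval}, so that each branch of the ordered-interval definition is correctly paired with the corresponding evaluation of $\min$ and $\max$.
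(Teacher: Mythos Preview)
Your proof is correct and essentially coincides with the paper's own argument, which simply states that the identity is established by membership verification (showing each side is contained in the other). Your explicit case split on $a \le b$ versus $a > b$ is just the unfolded version of that same verification, so there is no substantive difference in approach.
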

			\begin{proof}
				The proof is carried out through membership verification, demonstrating that each element of the left-hand side also belongs to the right-hand side, and conversely.
			\end{proof}
	Next, we define the operations 
	$\oplus, \ominus$, and $\odot : O(\mathbb{R}) \times O(\mathbb{R}) \to O(\mathbb{R})$ as follows:
	\begin{align}
		[a, b]_o \oplus [c, d]_o &= [a+c, b+d]_o,\\
		[a, b]_o \ominus [c, d]_o &= [a-c, b-d]_o,\\
		[a, b]_o \odot [c, d]_o &= [ac, bd]_o.
	\end{align}
	
	Also, we can define the absolute value of a ordered interval as
	\[
	|[a, b]_o| = [|a|, |b|]_o.
	\]
	
\end{definition}
With this definition of intervals and the associated operations, the ambiguities that typically arise during the subtraction and comparison of intervals are resolved.
For example, if $A=[3,4]_o,B=[2,10]_o$, we have\\
\[
A \ominus B = [3-2, 4-10]_o = [1, -6]_o.
\]
For brevity, we denote the ordered interval $[a,a]_o$ by $\tilde{a}_o$.
We know that the following relationship holds for intervals:
\begin{equation}\label{eq:Subsetsorderedintervals}
	[a,b] \subseteq [c,d] \iff c \leq a \text{ and } b \leq d.
\end{equation}
This relationship can be equivalently rewritten for ordered intervals as follows:

\begin{lemma}
	Consider two ordered intervals $[a,b]_o$ and $[c,d]_o$ belonging to $O(\mathbb{R})$. 
	The following relation is satisfied:
	
	\begin{equation}
		[a,b]_o\subseteq[c,d]_o\iff \min\{d,c\}\leq\min\{a,b\},\max\{a,b\}\leq\max\{c,d\}.
	\end{equation}
	\begin{proof}
		With reference to relations~\eqref{eq:Subsetsorderedintervals} and~\eqref{eq:Transformationintervalsorderedintervals}, 
		the proof proceeds as follows:
		\begin{eqnarray*}
			[a,b]_o\subseteq[c,d]_o&\iff&
			[\min\{a,b\},\max\{a,b\}]\subseteq[\min\{d,c\},\max\{c,d\}]\\ &\iff&
			\min\{d,c\}\leq\min\{a,b\},\max\{a,b\}\leq\max\{c,d\}\\
		\end{eqnarray*}	
	\end{proof}
\end{lemma}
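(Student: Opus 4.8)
The plan is to reduce the assertion to the classical inclusion criterion, using the normalization supplied by the transformation lemma~\eqref{eq:Transformationintervalsorderedintervals}. The key observation is that an ordered interval, although defined with possibly unordered endpoints, coincides as a point set with an ordinary closed interval whose endpoints are correctly arranged: $[a,b]_o=[\min\{a,b\},\max\{a,b\}]$ and likewise for $[c,d]_o$. Consequently the inclusion on the left-hand side is nothing but an inclusion of two standard intervals, and I expect the classical characterization~\eqref{eq:Subsetsorderedintervals} to settle the matter almost immediately.

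Concretely, I would proceed in two steps. First I would rewrite both sides of the inclusion by means of~\eqref{eq:Transformationintervalsorderedintervals}, obtaining the equivalence
\[
[a,b]_o\subseteq[c,d]_o
\iff
[\min\{a,b\},\max\{a,b\}]\subseteq[\min\{c,d\},\max\{c,d\}].
\]
Second, I would apply~\eqref{eq:Subsetsorderedintervals} to the right-hand inclusion, reading $\min\{a,b\}$ and $\min\{c,d\}$ as the lower endpoints and $\max\{a,b\}$, $\max\{c,d\}$ as the upper endpoints. This produces the inclusion exactly when $\min\{c,d\}\le\min\{a,b\}$ and $\max\{a,b\}\le\max\{c,d\}$, and since $\min\{c,d\}=\min\{d,c\}$ this is precisely the condition stated in the lemma. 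Chaining the two biconditionals then gives the claim.

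The step that deserves the most care --- and essentially the only place an error could slip in --- is checking that the hypotheses of~\eqref{eq:Subsetsorderedintervals} are actually in force after the normalization, i.e.\ that each normalized interval really has its left endpoint not exceeding its right endpoint. This will follow at once from $\min\{a,b\}\le\max\{a,b\}$ and $\min\{c,d\}\le\max\{c,d\}$, which hold by the very definition of $\min$ and $\max$; in particular the degenerate cases $a=b$ or $c=d$, where an interval collapses to a single point, are absorbed without separate treatment. Should one wish to bypass~\eqref{eq:Subsetsorderedintervals} altogether, the same equivalence could instead be obtained by a direct membership argument --- verifying both inclusions of the underlying point sets, in the spirit of the proof of~\eqref{eq:Transformationintervalsorderedintervals} --- but the reduction above is shorter and more transparent, so that is the route I would take.
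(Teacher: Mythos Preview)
Your proposal is correct and follows essentially the same route as the paper: normalize both ordered intervals via~\eqref{eq:Transformationintervalsorderedintervals} and then apply the classical inclusion criterion~\eqref{eq:Subsetsorderedintervals}. Your version is slightly more detailed in verifying that the normalized intervals have correctly ordered endpoints, but the argument is otherwise identical.
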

Next, we introduce a relation $\succeq$ on the set $O(\mathbb{R})$ defined by:
\begin{lemma}
	Suppose that $[a,b]_o,[c,d]_o\in O(\RR)$. Then, the relation defined below constitutes a partial order:
	\[[a,b]_o\succeq[c,d]_o\iff \min\{a,b\}\geq \min\{c,d\} , \max\{a,b\}\geq \max\{c,d\}.\]
\end{lemma}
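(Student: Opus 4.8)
The plan is to reduce $\succeq$ to the familiar coordinatewise order on $\mathbb{R}^2$. For an ordered interval $A=[a,b]_o$ write $m_A=\min\{a,b\}$ and $M_A=\max\{a,b\}$; by the representation \eqref{eq:Transformationintervalsorderedintervals} we have $A=[m_A,M_A]$, so $A$ is completely determined by the pair $(m_A,M_A)$ with $m_A\le M_A$. The defining relation then reads $A\succeq B\iff m_A\ge m_B$ and $M_A\ge M_B$, i.e.\ it is exactly the pullback along the map $A\mapsto(m_A,M_A)$ of the componentwise order $\ge$ on $\mathbb{R}^2$. I would first record that this already shows $\succeq$ is well defined on $O(\mathbb{R})$: although the symbol $[a,b]_o$ does not determine the ordered pair $(a,b)$ uniquely (indeed $[a,b]_o=[b,a]_o$), the condition refers only to $\min$ and $\max$ of the endpoints, hence only to the underlying set. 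With this reduction in hand it remains to verify the three partial-order axioms.

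Reflexivity and transitivity follow immediately from the corresponding properties of $\ge$ on $\mathbb{R}$ applied in each coordinate. For reflexivity, $m_A\ge m_A$ and $M_A\ge M_A$ give $A\succeq A$. For transitivity, if $A\succeq B$ and $B\succeq C$ then chaining the two coordinate inequalities yields $m_A\ge m_C$ and $M_A\ge M_C$, hence $A\succeq C$.

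The one step requiring care is antisymmetry, and this is where the ordered-interval structure actually enters. Assuming $A\succeq B$ and $B\succeq A$, antisymmetry of $\ge$ in each coordinate forces $m_A=m_B$ and $M_A=M_B$. At this point the desired conclusion $A=B$ is a statement about ordered intervals rather than about numbers, so I would invoke \eqref{eq:Transformationintervalsorderedintervals} once more to write $A=[m_A,M_A]=[m_B,M_B]=B$ as elements of $O(\mathbb{R})$. I expect this identification --- reading equality in $O(\mathbb{R})$ as equality of the underlying sets through the canonical $\min/\max$ form --- to be the main (and essentially the only) conceptual point; once it is made explicit no genuine obstacle remains, the whole argument being a direct transfer of the order structure of $(\mathbb{R}^2,\ge)$ along an injective coordinate map.
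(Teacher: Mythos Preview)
Your argument is correct and complete. The paper itself states this lemma without proof, so there is nothing to compare against; your reduction to the coordinatewise order on $\mathbb{R}^2$ via the map $A\mapsto(m_A,M_A)$ is a clean way to handle all three axioms at once, and your explicit treatment of well-definedness and antisymmetry (using \eqref{eq:Transformationintervalsorderedintervals} to pass from equality of $(m_A,M_A)$ to equality in $O(\mathbb{R})$) supplies exactly the detail the paper omits.
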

Building on the operations established for ordered intervals, we now introduce the fuzzy inner product.
\allowdisplaybreaks
\section{ Fuzzy Inner Product and Fuzzy Norm}
\begin{definition}\label{def:Definitionfuzzyinnerproduct}
	Let $X$ be a vector space over $\mathbb{C}$, and let $\mathcal{K}:\mathbb{R} \to [0,1]$ be a fuzzy number.  
	Assume that for every  $\alpha\in(0,1]$ there exist constants $A_\alpha, B_\alpha$ such that
	\[
	0 < \min\{A_\alpha, B_\alpha\} \le \max\{A_\alpha, B_\alpha\} < \infty.
	\]
	
	A function
	\[
	\langle  ., . \rangle_\alpha : (0,1] \times X \times X \to \mathbb{C}
	\]
	is called a \emph{fuzzy inner product} if there exist inner products $\langle.,.\rangle^\prime$ and $\langle.,.\rangle^{\prime\prime}$ on $X$ such that, for each $x,y \in X$ and $\alpha\in (0,1]$, the following relation holds:
	\begin{align}\label{eq:fuzzy_inner}
		\mathcal{K}\bigl(	\big|\langle  x, y \rangle_\alpha \big|\bigr) \ge \alpha 
		\quad \Longleftrightarrow \quad
		\big|\langle  x, y \rangle_\alpha \big| \in \bigl[\, A_\alpha\vert \langle x,y\rangle^\prime \vert,B_\alpha \vert \langle x,y\rangle^{\prime\prime} \vert \,\bigr]_o.
	\end{align}
	
\end{definition}
A vector space $X$ endowed with a fuzzy inner product is called a \emph{fuzzy inner product space}. 
The ordered triple 
\[
\bigg(\langle  ., . \rangle_\alpha , \langle \cdot , \cdot \rangle', \langle \cdot , \cdot \rangle''\bigg)
\]
is referred to as a \emph{fuzzy inner product triple}.\\ 
The approach of using ordered intervals simplifies the theoretical framework while remaining consistent with the fundamental properties of fuzzy inner products.
In this formulation, the classical notion of inner product is extended by associating each pair $(\alpha, x)$ with a complex-valued function that encodes the interaction between the element $x$ and the fuzzy parameter $\alpha$. Since this function satisfies a fuzzy membership condition parameterized by $\alpha$, the inner product operation can be interpreted through a graded, membership-based perspective. In this way, a fuzzy-parametric framework is created in which the outcome of inner product is determined not only by algebraic rules but also by the level of fuzzy membership associated with $\alpha$.
This perspective provides a richer and more flexible structure in which inner product can vary smoothly across different degrees of fuzziness, allowing the operation to adapt to uncertainty, partial truth, and the gradations inherent in fuzzy systems. Consequently, the framework provides a broader and more general concept of inner product. 
\begin{definition}\label{def:Definition of fuzzy norm}
	Let $X$ be a vector space over $\mathbb{C}$ and let $\mathcal{K}:\mathbb{R} \to [0,1]$ be a fuzzy number.  
	Assume that for every  $\alpha\in(0,1]$ there exist constants $C_\alpha, D_\alpha$ such that
	\[
	0 < \min\{C_\alpha, D_\alpha\} \le \max\{C_\alpha, D_\alpha\} < \infty.
	\] 
	A function 
	\[
	\norm{.}_\alpha : (0,1] \times X \to \mathbb{C}
	\]
	is called a \emph{fuzzy norm} if there exist norms 
	$\norm{.}^\prime$ 
	and $\norm{.}^{\prime\prime}$  
	such that, for each $\alpha \in (0,1]$ and $x \in X$, the following formula holds:
	\begin{equation}\label{eq:fuzzy_norm}
		\mathcal{K}\bigl(\big\vert\norm{x}_\alpha \big\vert\bigr) \geq \alpha
		\;\Leftrightarrow\;
		\big| \norm{x}_\alpha\big| \in 
		\big[ C_\alpha\|x\|',\, D_\alpha\|x\|'' \big]_o.
	\end{equation}
	Furthermore, the ordered triple 
	\[
	(\norm{.}_\alpha, \| \cdot \|', \| \cdot \|'')
	\]
	is called a \emph{fuzzy norm triple}.
	
\end{definition}

\begin{example}
	Let $X = \mathbb{R}^2$ and suppose the fuzzy number 
	$\mathcal{K} : \mathbb{R} \to [0,1]$ is defined by
	\[
	\mathcal{K}\big(\big\vert\norm{x}_\alpha \big\vert\big) = 
	\begin{cases}
		1, & \text{if } \big\vert\norm{x}_\alpha \big\vert \in [3 \|x\|_2,\, 2 \|x\|_3]_o, \\[6pt]
		0, & \text{otherwise}.
	\end{cases}
	\]
	It follows that for any $x \in \mathbb{R}^2$ and $\alpha \in (0,1]$, the equivalence
	\[
	\mathcal{K}\big(\big\vert\norm{x}_\alpha \big\vert\big) \geq \alpha \quad \Leftrightarrow \quad \big\vert\norm{x}_\alpha \big\vert \in [3 \|x\|_2,\, 2 \|x\|_3]_o
	\]
	holds.  
	
	Now, consider the function $\norm{.}_\alpha \times \mathbb{R} \to \mathbb{C}$ defined by
	\[
	\norm{x}_\alpha  = \sqrt{9 \alpha^4 \|x\|_2^2 + 5 \alpha^2 (1 - \alpha^2) \|x\|_2 \|x\|_3} \; + \; i \sqrt{4 (1 - \alpha^2) \|x\|_3^2 + 7 \alpha^2 (1 - \alpha^2) \|x\|_2 \|x\|_3}.
	\]
	
	It is straightforward to compute the magnitude of this function:
	\[
	\big\vert\norm{x}_\alpha \big\vert = 3 \alpha^2 \|x\|_2 + 2 (1 - \alpha^2) \|x\|_3.
	\]
	
	Since $\alpha^2 + (1 - \alpha^2) = 1$, it follows that for every $x \in X$, the value $\big\vert\norm{x}_\alpha \big\vert$ lies within the ordered interval $[3 \|x\|_2, 2 \|x\|_3]_o$.  
	
	Therefore, the function $Q : (0,1] \times \mathbb{R} \to \mathbb{C}$ qualifies as a \emph{fuzzy norm}.
\end{example}

\begin{remark}
	The function \(\norm{.}_\alpha\) defined in the preceding example is designed so that its magnitude 
	\[
\big\vert\norm{x}_\alpha \big\vert = 3 \alpha^2 \|x\|_2 + 2 (1 - \alpha^2) \|x\|_3
	\]
	represents a convex combination of the endpoints \(3\|x\|_2\) and \(2\|x\|_3\), controlled by the parameter \(\alpha^2\). This construction guarantees that for every fixed \(x \in X\) and \(\alpha \in (0,1]\), the value \(\big\vert\norm{x}_\alpha \big\vert\) lies precisely within the ordered interval \([3\|x\|_2, 2\|x\|_3]_o\). Accordingly, the membership function \(\mathcal{K}\) assigns the maximal membership degree of 1 when \(\big\vert\norm{x}_\alpha \big\vert\) falls in this interval, and zero otherwise. This behavior aligns perfectly with the definition of a fuzzy norm given in Definition~\ref{def:Definition of fuzzy norm} Thus, this example effectively demonstrates how fuzzy norm can be represented via appropriately structured functions on vector spaces, providing a concrete realization of the abstract fuzzy norm framework.
\end{remark}

\begin{lemma}
	Let  $(\langle  ., . \rangle_\alpha,\langle .,.\rangle^\prime,\langle .,.\rangle^{\prime\prime})$ be a fuzzy inner product triple.  Then $(\sqrt{\langle  ., . \rangle_\alpha},\norm{.}^\prime,\norm{.}^{\prime\prime})$is  a fuzzy norm triple.
\end{lemma}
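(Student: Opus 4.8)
The plan is to build the fuzzy norm triple directly from the given data by taking the canonical induced norms and rescaled constants. First I would set $\|\cdot\|'$ and $\|\cdot\|''$ to be the norms induced by the two classical inner products, $\|x\|' := \sqrt{\langle x,x\rangle'}$ and $\|x\|'' := \sqrt{\langle x,x\rangle''}$, define the norm map by $\norm{x}_\alpha := \sqrt{\langle x,x\rangle_\alpha}$, and put $C_\alpha := \sqrt{A_\alpha}$ and $D_\alpha := \sqrt{B_\alpha}$. Since $t\mapsto\sqrt t$ is strictly increasing on $[0,\infty)$, the admissibility condition $0<\min\{A_\alpha,B_\alpha\}\le\max\{A_\alpha,B_\alpha\}<\infty$ transfers verbatim to $C_\alpha,D_\alpha$, so these are legitimate constants for Definition~\ref{def:Definition of fuzzy norm}; moreover $\|\cdot\|'$ and $\|\cdot\|''$ are genuine norms because they are induced by inner products, which is all the definition requires of them (no norm axioms need be checked for $\norm{.}_\alpha$ itself).

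Next I would specialize the defining equivalence \eqref{eq:fuzzy_inner} to the diagonal $y=x$. Because $\langle x,x\rangle'=(\|x\|')^2\ge 0$ and $\langle x,x\rangle''=(\|x\|'')^2\ge 0$, the moduli simplify to $|\langle x,x\rangle'|=(\|x\|')^2$ and $|\langle x,x\rangle''|=(\|x\|'')^2$, so \eqref{eq:fuzzy_inner} becomes
\[
\mathcal{K}\bigl(|\langle x,x\rangle_\alpha|\bigr)\ge\alpha
\quad\Longleftrightarrow\quad
|\langle x,x\rangle_\alpha|\in\bigl[A_\alpha(\|x\|')^2,\,B_\alpha(\|x\|'')^2\bigr]_o.
\]
Using the elementary identity $|\sqrt z|=\sqrt{|z|}$ for $z\in\mathbb{C}$ (valid for any branch, since $|\sqrt z|^2=|z|$), the chosen $\norm{x}_\alpha=\sqrt{\langle x,x\rangle_\alpha}$ satisfies $\bigl|\norm{x}_\alpha\bigr|=\sqrt{|\langle x,x\rangle_\alpha|}$.

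The heart of the argument is to push the membership statement through the square root. From the representation $[p,q]_o=[\min\{p,q\},\max\{p,q\}]$ of \eqref{eq:Transformationintervalsorderedintervals} and the monotonicity of $\sqrt{\cdot}$, one has $\sqrt{\min\{p,q\}}=\min\{\sqrt p,\sqrt q\}$ and $\sqrt{\max\{p,q\}}=\max\{\sqrt p,\sqrt q\}$ for $p,q\ge 0$, whence for every $t\ge 0$,
\[
t\in[p,q]_o\quad\Longleftrightarrow\quad\sqrt t\in[\sqrt p,\sqrt q]_o.
\]
Applying this with $t=|\langle x,x\rangle_\alpha|$, $p=A_\alpha(\|x\|')^2$ and $q=B_\alpha(\|x\|'')^2$ (both nonnegative, as $A_\alpha,B_\alpha>0$) rewrites the right-hand side of the diagonal equivalence as $\bigl|\norm{x}_\alpha\bigr|\in[C_\alpha\|x\|',\,D_\alpha\|x\|'']_o$, which is exactly the right-hand side of \eqref{eq:fuzzy_norm}.

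It remains to reconcile the argument of the membership function, which I expect to be the only genuinely delicate point: the inner-product equivalence evaluates $\mathcal{K}$ at $|\langle x,x\rangle_\alpha|$, whereas Definition~\ref{def:Definition of fuzzy norm} evaluates it at $\bigl|\norm{x}_\alpha\bigr|$. I would resolve this by attaching to the norm the fuzzy number $\widetilde{\mathcal{K}}(t):=\mathcal{K}(t^2)$, which is again a map $\mathbb{R}\to[0,1]$ and hence admissible; then $\widetilde{\mathcal{K}}\bigl(|\norm{x}_\alpha|\bigr)=\mathcal{K}\bigl(|\norm{x}_\alpha|^2\bigr)=\mathcal{K}\bigl(|\langle x,x\rangle_\alpha|\bigr)$, so the threshold condition on the left is preserved unchanged. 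Chaining the three equivalences then delivers \eqref{eq:fuzzy_norm} for the triple $(\sqrt{\langle\cdot,\cdot\rangle_\alpha},\|\cdot\|',\|\cdot\|'')$, completing the proof. The remaining verifications—the transfer of the constant bounds and the commutation of $\sqrt{\cdot}$ with the ordered-interval construction—are routine once stated carefully.
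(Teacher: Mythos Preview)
Your proposal is correct and follows essentially the same route as the paper: specialize \eqref{eq:fuzzy_inner} to the diagonal, apply the monotone square root to pass from $[A_\alpha(\|x\|')^2,B_\alpha(\|x\|'')^2]_o$ to $[\sqrt{A_\alpha}\|x\|',\sqrt{B_\alpha}\|x\|'']_o$, and compose the fuzzy number with $t\mapsto t^2$ to reconcile the left-hand sides. Your treatment is in fact more careful than the paper's, which leaves the commutation of $\sqrt{\cdot}$ with $[\cdot,\cdot]_o$ implicit and defines the new fuzzy number only pointwise rather than as your explicit $\widetilde{\mathcal K}(t)=\mathcal K(t^2)$.
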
 
\begin{proof}
	Given the assumptions of the lemma, and in view of formula~\eqref{eq:fuzzy_inner}, it follows that
	
	\[
	\mathcal{K}\big(\big|\langle  x, x \rangle_\alpha\big|\big) \geq \alpha \Longleftrightarrow |\langle  x, x \rangle_\alpha| \in [ A_\alpha\langle x, x \rangle', B_\alpha\langle x, x \rangle'']_o.
	\]
Consequently, the following relation holds:
	\[
 |\sqrt{\langle  x, x \rangle_\alpha}| \in [ \sqrt{A_\alpha\langle x, x \rangle'}, \sqrt{B_\alpha\langle x, x \rangle''}]_o.
\]
	Define 
\begin{equation}\label{eq:norm_alpha,inner product_alpha}
	\norm{x}_\alpha = \sqrt{\langle  x, x \rangle_\alpha}.
\end{equation}
The fuzzy number $\mathcal{K}^\prime:\mathbb{R}\to [0,1]$ is defined as follows:
\[\mathcal{K^\prime}\big(\big|\sqrt{\langle  x, x \rangle_\alpha}\big|\big) =\mathcal{K}\big(\big|\langle  x, x \rangle_\alpha\big|\big). \]
	Formula~\eqref{eq:fuzzy_inner} follows directly from the properties of ordered intervals and the fuzzy inner product.
		\[
	\mathcal{K^\prime}\big(\big|\sqrt{\langle  x, x \rangle_\alpha}\big|\big) \geq \alpha \Longleftrightarrow |\sqrt{\langle  x, x \rangle_\alpha}| \in [ \sqrt{A_\alpha}\norm{x}', \sqrt{B_\alpha}\norm{x}'']_o.
	\]
\end{proof}
In general, one cannot rely directly on formula~\eqref{eq:fuzzy_inner} to relate classical inner product phenomena to the fuzzy setting. Our approach therefore focuses on identifying configurations in which the resulting expressions admit a clearer structural analysis. In particular, a substantial simplification is achieved by restricting attention to cases where the following holds for any pair of vectors $x, y \in X$:
\begin{equation}\label{eq:fuzzyinerproduct-simple1}
	\begin{split}
		\mathcal{K}\big(	\big|\langle  x, y \rangle_\alpha \big|\big) \geq \alpha 
		&\;\Longleftrightarrow\;
			\big|\langle  x, y \rangle_\alpha \big| \in 
		\bigl[A_{\alpha} |\langle x,y\rangle'|,\, B_{\alpha} |\langle x,y\rangle'|\bigr]_o,\\
		& 0 < A_{\alpha} \leq B_{\alpha} <\infty.
	\end{split}
\end{equation}
More specifically, assume that 
\(
\mathcal{K}\big(\big|\langle x, y \rangle_\alpha\big|\big) \ge \alpha.
\)
Then, the following bounds hold:
\begin{equation}\label{eq:fuzzyinerproduct-simple2}
	\begin{split}
		A_{\alpha}\,|\langle x,y\rangle'|
		\;\le\; \big|\langle x, y \rangle_\alpha\big|
		\;\le\; B_{\alpha}\,|\langle x,y\rangle'|, 
		\qquad
		0 < A_\alpha \le B_\alpha < \infty .
	\end{split}
\end{equation}
Naturally, upon substituting formula~\eqref{eq:fuzzy_inner} into formula~\eqref{eq:fuzzyinerproduct-simple1}, it follows that, to facilitate working with the concept of a fuzzy norm, the following formula should replace formula~\eqref{eq:fuzzyinerproduct-simple2}:
\begin{equation}\label{eq:fuzzynorm-simple1}
	\begin{split}
		\mathcal{K}\bigl(\big\vert \norm{x}_\alpha\big\vert\bigr) \geq \alpha
		\;&\Leftrightarrow\;
		\big| \norm{x}_\alpha \big| \in 
		\big[ C_\alpha\|x\|',\, D_\alpha\|x\|' \big]_o,\\
		0&<C_\alpha\leq D_\alpha<\infty.		
	\end{split}
\end{equation}
More specifically, assuming that 
\[
\mathcal{K}\bigl(\big| \norm{x}_\alpha \big|\bigr) \geq \alpha,
\] 
the following bounds hold:
\begin{equation}\label{eq:fuzzynorm-simple2}
	\begin{split}
		C_\alpha\|x\|'\leq &\big| \norm{x}_\alpha \big|\leq  D_\alpha\|x\|',\\
		0<&C_\alpha\leq D_\alpha<\infty.
	\end{split}
\end{equation}
Assuming that relation~\eqref{eq:fuzzyinerproduct-simple2} holds, several structural connections between the fuzzy inner product and the classical inner product can be established. In particular, the orthogonality of the two multiplication operations is preserved under this assumption.

\begin{proposition}
	Let $	\langle  ., . \rangle_\alpha :(0,1]\times X\times X\to \mathbb{C}$ be a fuzzy inner product satisfying formula~\eqref{eq:fuzzyinerproduct-simple2}. Then, for all $x, y \in X$ and any $\alpha \in (0,1]$, the following equivalence holds:
	\[
		\langle  x, y \rangle_\alpha  = 0 \quad \Longleftrightarrow \quad \langle x, y \rangle^\prime = 0.
	\]
	\begin{proof}
		By formula~\eqref{eq:fuzzyinerproduct-simple2}, the required property holds for all relevant elements. Therefore, the assertion follows immediately.
	\end{proof}
	
\end{proposition}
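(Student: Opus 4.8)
The plan is to extract both implications directly from the two-sided estimate \eqref{eq:fuzzyinerproduct-simple2}, using only the strict positivity $A_\alpha>0$ and finiteness $B_\alpha<\infty$ of the bounding constants. In the simplified regime this estimate reads
\[
A_\alpha\,|\langle x,y\rangle'| \;\le\; \big|\langle x,y\rangle_\alpha\big| \;\le\; B_\alpha\,|\langle x,y\rangle'|,
\]
and it is the genuine engine of the argument: it pins $|\langle x,y\rangle_\alpha|$ between two positive multiples of $|\langle x,y\rangle'|$, so neither quantity can vanish without forcing the other to vanish as well.

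For the forward implication I would assume $\langle x,y\rangle_\alpha=0$, so that the middle term of \eqref{eq:fuzzyinerproduct-simple2} is zero. The lower bound then forces $A_\alpha|\langle x,y\rangle'|\le 0$; since $A_\alpha>0$ and $|\langle x,y\rangle'|\ge 0$, we conclude $|\langle x,y\rangle'|=0$, and hence $\langle x,y\rangle'=0$ by definiteness of the classical modulus. Conversely, assuming $\langle x,y\rangle'=0$ makes $|\langle x,y\rangle'|=0$, so the upper bound yields $|\langle x,y\rangle_\alpha|\le B_\alpha\cdot 0=0$, whence $\langle x,y\rangle_\alpha=0$. Each direction is thus a single inequality evaluated at a vanishing endpoint.

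The subtle point, and the step I expect to demand the most care, is the legitimacy of invoking \eqref{eq:fuzzyinerproduct-simple2} unconditionally. That estimate was derived under the membership hypothesis $\mathcal{K}(|\langle x,y\rangle_\alpha|)\ge\alpha$, which through \eqref{eq:fuzzyinerproduct-simple1} is exactly the assertion that $|\langle x,y\rangle_\alpha|$ lies in the ordered interval $[A_\alpha|\langle x,y\rangle'|,\,B_\alpha|\langle x,y\rangle'|]_o$. I would therefore open the proof by recording that, in the present framework, the fuzzy value $\langle x,y\rangle_\alpha$ is by construction an admissible realization, so the membership condition holds for all $x,y\in X$ and all $\alpha\in(0,1]$; this is precisely what licenses the one-line deductions above. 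The caveat is not cosmetic: if membership were allowed to fail while $\langle x,y\rangle'=0$, the target interval would degenerate to $\{0\}$, and by \eqref{eq:fuzzyinerproduct-simple1} the only available conclusion would be $\langle x,y\rangle_\alpha=0 \Leftrightarrow \mathcal{K}(|\langle x,y\rangle_\alpha|)\ge\alpha$, so the converse implication genuinely hinges on this standing assumption.
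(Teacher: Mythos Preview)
Your argument is correct and is precisely what the paper has in mind: the paper's own proof is the single sentence ``By formula~\eqref{eq:fuzzyinerproduct-simple2}, the required property holds for all relevant elements,'' and you have merely unpacked the two directions of that sandwich inequality explicitly. Your additional paragraph on the standing membership hypothesis $\mathcal{K}(|\langle x,y\rangle_\alpha|)\ge\alpha$ is a legitimate clarification that the paper leaves implicit by simply assuming formula~\eqref{eq:fuzzyinerproduct-simple2} holds throughout, but it does not change the substance of the proof.
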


\section{Some Fundamental Inequalities}
Inequalities are fundamental tools in mathematics, with classical examples including the Cauchy--Schwarz and Bessel inequalities. In this section, we present their fuzzy analogues and introduce two new classically equivalent inequalities, which extend the theoretical framework. To this end, we first prove a preliminary lemma, which serves as the basis for establishing these inequalities.

\begin{lemma}\label{lemma:fuzzy-norm-bounds}
		Let $X$ be an inner product space with classical inner product 
	\(\langle\cdot,\cdot\rangle'\) 
	and induced norm 
	\(\|\cdot\|'\).
	Let 
	\[
	\langle  ., . \rangle_\alpha : (0,1] \times X \times X \to \mathbb{C}
	\]
	be a fuzzy inner product satisfying formula~\eqref{eq:fuzzyinerproduct-simple2}.  

	Under these notations, we have
	\begin{equation}\label{eq:fuzzy-norm-bounds}
		\frac{|\|x\|_{\alpha}^{2}|}{B_{\alpha}}
		\;\leq\;
		\|x\|^{\prime 2}
		\;\leq\;
		\frac{|\|x\|_{\alpha}^{2}|}{A_{\alpha}} ,
	\end{equation}
	for every \(x\in X\) and every \(\alpha\in(0,1]\).
\end{lemma}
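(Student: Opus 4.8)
The plan is to specialize the two-sided comparison between the fuzzy and classical inner products to the diagonal $y=x$, and then convert the resulting estimate on $|\langle x,x\rangle_\alpha|$ into the asserted estimate on $|\|x\|_\alpha^2|$.

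First I would use the norm-from-inner-product relation $\|x\|_\alpha=\sqrt{\langle x,x\rangle_\alpha}$, which upon squaring gives $\|x\|_\alpha^2=\langle x,x\rangle_\alpha$ and hence the identity $|\|x\|_\alpha^2|=|\langle x,x\rangle_\alpha|$. Next I would apply formula~\eqref{eq:fuzzyinerproduct-simple2} in the special case $y=x$ to obtain
\[
A_\alpha\,|\langle x,x\rangle'|\;\le\;|\langle x,x\rangle_\alpha|\;\le\;B_\alpha\,|\langle x,x\rangle'|.
\]
Because $\langle\cdot,\cdot\rangle'$ is a genuine classical inner product, axiom~(iv) of Definition~\ref{def:inner-product} ensures $\langle x,x\rangle'\ge 0$, so that $|\langle x,x\rangle'|=\langle x,x\rangle'=\|x\|^{\prime 2}$. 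Substituting this together with the identity $|\|x\|_\alpha^2|=|\langle x,x\rangle_\alpha|$ into the previous display yields
\[
A_\alpha\,\|x\|^{\prime 2}\;\le\;|\|x\|_\alpha^2|\;\le\;B_\alpha\,\|x\|^{\prime 2}.
\]

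Finally, since $0<A_\alpha\le B_\alpha<\infty$, both constants are strictly positive, so I may divide the left-hand inequality by $A_\alpha$ and the right-hand inequality by $B_\alpha$; this rearranges the chain into the claimed form
\[
\frac{|\|x\|_\alpha^2|}{B_\alpha}\;\le\;\|x\|^{\prime 2}\;\le\;\frac{|\|x\|_\alpha^2|}{A_\alpha}.
\]

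This is essentially a one-line specialization, so I do not anticipate any genuine analytic obstacle; the points requiring care are bookkeeping ones. First, formula~\eqref{eq:fuzzyinerproduct-simple2} is stated under the membership hypothesis $\mathcal{K}(|\langle x,x\rangle_\alpha|)\ge\alpha$, so strictly speaking the bound, and therefore the conclusion, holds on the $\alpha$-level set where this membership is satisfied; I would make this standing assumption explicit rather than assert the inequality for an arbitrary $x$ carrying no membership constraint. Second, since $\|x\|_\alpha$ is complex-valued in general, I must retain the modulus $|\|x\|_\alpha^2|$ throughout the argument and only drop the absolute value for $\langle x,x\rangle'$, which is real and nonnegative by positivity of the classical inner product.
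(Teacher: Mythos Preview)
Your argument is correct and is exactly the approach taken in the paper: specialize \eqref{eq:fuzzyinerproduct-simple2} to $y=x$, identify $|\langle x,x\rangle'|$ with $\|x\|^{\prime 2}$, and divide by the positive constants $A_\alpha$ and $B_\alpha$. Your additional remarks on the membership hypothesis and on retaining the modulus for the complex-valued fuzzy quantity are sound bookkeeping observations that the paper leaves implicit.
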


\begin{proof}
	Two consecutive applications of inequality  \eqref{eq:fuzzyinerproduct-simple2} lead directly to the following bound:
	\[
	\frac{\big|\langle  x, x \rangle_\alpha\big|}{B_{\alpha}}
	\;\leq\;
	\|x\|^{\prime 2}
	\;\leq\;
	\frac{\big|\langle  x, x \rangle_\alpha\big|}{A_{\alpha}} ,
	\]
	which is precisely~\eqref{eq:fuzzy-norm-bounds}.
\end{proof}
In order to analyze the properties of fuzzy inner  product, we first revisit the Fuzzy Cauchy–Schwarz and Fuzzy Bessel inequalities, which serve as essential tools in establishing numerous results in fuzzy analysis. Building on these foundational inequalities, we further introduce two new inequalities that are classically equivalent, thereby extending the theoretical framework and providing additional tools for subsequent developments.

\begin{proposition}\label{prop:fuzzy-inequalities}
Let us assume that the hypotheses of Lemma \ref{lemma:fuzzy-norm-bounds} are in force.
	Then, for all \(x,y\in X\) and all \(\alpha\in(0,1]\), the following statements hold:
	\begin{enumerate}[label=(\roman*)]
		
		\item \textbf{Fuzzy Cauchy--Schwarz Inequality.}
		\[
		|\langle x,y\rangle_{\alpha}|
		\;\leq\;
		\left(\frac{B_{\alpha}}{A_{\alpha}}\right)
		\,\big|\|x\|_{\alpha}\,\|y\|_{\alpha}\big|.
		\]
		
		\item \textbf{Fuzzy Parallelogram Inequality.}
		\[
		\frac{2A_{\alpha}}{B_{\alpha}}
		\big(\big|\|x\|_{\alpha}^{2}\big| + \big|\|y\|_{\alpha}^{2}\big|\big)
		\;\leq\;
		\big|\|x+y\|_{\alpha}^{2}\big| +\big| \|x-y\|_{\alpha}^{2}\big|
		\;\leq\;
		\frac{2B_{\alpha}}{A_{\alpha}}
		\big(\big|\|x\|_{\alpha}^{2}\big| + \big|\|y\|_{\alpha}^{2}\big|\big).
		\]
		
		\item \textbf{Fuzzy Polarization Inequality (real case).}
		If \(x\in \mathbb{R}^n\) , then
		\[
		\big|\|x+y\|_{\alpha}^{2}\big|
		\;\leq\;
		\frac{B_{\alpha}}{A_{\alpha}}\big ( 4\,\big|\langle x,y\rangle_{\alpha}\big|
		+
		\big|\|x-y\|_{\alpha}^{2}\big|\big)
		.
		\]
		
		\item \textbf{Fuzzy Bessel Inequality.}
		If \(\{e_i\}^\infty_{i=1}\) is an orthogonal sequence in \(X\), then
		\[
		\sum_{i=1}^{\infty}
		|\langle x,e_i\rangle_{\alpha}|^{2}
		\;\leq\;
		\frac{B_{\alpha}^{2}}{A_{\alpha}}\,
		\bigg|\|x\|_{\alpha}^{2}\bigg|.
		\]
	\end{enumerate}
\end{proposition}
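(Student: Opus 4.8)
The plan is to treat all four inequalities uniformly as consequences of three ingredients: the two-sided inner-product bound~\eqref{eq:fuzzyinerproduct-simple2}, the norm sandwich of Lemma~\ref{lemma:fuzzy-norm-bounds}, and the corresponding \emph{classical} inequality applied to the auxiliary structure $(\langle\cdot,\cdot\rangle',\|\cdot\|')$. The recurring mechanism is to replace each fuzzy quantity by a multiple of its classical counterpart, invoke the classical relation, and convert back, always selecting the upper bound from~\eqref{eq:fuzzyinerproduct-simple2} and Lemma~\ref{lemma:fuzzy-norm-bounds} when an expression is being bounded from above and the lower bound when it is being bounded from below. Throughout I would use $\|x\|_\alpha=\sqrt{\langle x,x\rangle_\alpha}$ from the preceding lemma, so that $|\,\|x\|_\alpha^2\,|=|\langle x,x\rangle_\alpha|=|\,\|x\|_\alpha\,|^2$ and $\|x\|'^2=\langle x,x\rangle'\ge 0$; in particular Lemma~\ref{lemma:fuzzy-norm-bounds} gives $\|x\|'\le |\,\|x\|_\alpha\,|/\sqrt{A_\alpha}$ after taking square roots.

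For (i), first apply the right-hand side of~\eqref{eq:fuzzyinerproduct-simple2} to obtain $|\langle x,y\rangle_\alpha|\le B_\alpha|\langle x,y\rangle'|$, then classical Cauchy--Schwarz (Theorem~\ref{Cauchy--Schwarz Inequality}) to get $|\langle x,y\rangle'|\le\|x\|'\|y\|'$, and finally the upper half of Lemma~\ref{lemma:fuzzy-norm-bounds} on each factor $\|x\|',\|y\|'$. Multiplying produces the factor $B_\alpha/A_\alpha$ and yields the claim after noting that multiplicativity of the complex modulus gives $|\,\|x\|_\alpha\,|\,|\,\|y\|_\alpha\,|=|\,\|x\|_\alpha\|y\|_\alpha\,|$. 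For (ii), I would apply the Lemma~\ref{lemma:fuzzy-norm-bounds} sandwich to each of $x+y$ and $x-y$, use the classical parallelogram law (Proposition~\ref{prop:Parallelgram Rule}) to replace $\|x+y\|'^2+\|x-y\|'^2$ by $2(\|x\|'^2+\|y\|'^2)$, and then re-expand $\|x\|'^2,\|y\|'^2$ through the Lemma once more. The upper estimate uses $|\,\|u\|_\alpha^2\,|\le B_\alpha\|u\|'^2$ on the left pair and $\|x\|'^2\le|\,\|x\|_\alpha^2\,|/A_\alpha$ on the right, giving $2B_\alpha/A_\alpha$; the lower estimate reverses both directions to give $2A_\alpha/B_\alpha$.

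For (iii), I would start from the polarization identity (Theorem~\ref{thm:PolarizationIdentity}) rewritten as $\|x+y\|'^2=4\langle x,y\rangle'+\|x-y\|'^2$, pass to $\langle x,y\rangle'\le|\langle x,y\rangle'|$, and then apply the upper halves of~\eqref{eq:fuzzyinerproduct-simple2} and Lemma~\ref{lemma:fuzzy-norm-bounds} to $|\langle x,y\rangle'|$ and $\|x-y\|'^2$ respectively, uniformly extracting the factor $B_\alpha/A_\alpha$. For (iv), I would square the upper bound in~\eqref{eq:fuzzyinerproduct-simple2} to get $|\langle x,e_i\rangle_\alpha|^2\le B_\alpha^2|\langle x,e_i\rangle'|^2$, sum over $i$, apply classical Bessel~\eqref{ineq:Bessel's Inequality} to bound $\sum_i|\langle x,e_i\rangle'|^2\le\|x\|'^2$, and close with the upper bound of Lemma~\ref{lemma:fuzzy-norm-bounds}, producing $B_\alpha^2/A_\alpha$.

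I expect the only genuine subtleties to be two. First, the sign handling in the polarization step: the classical identity involves the \emph{signed} real inner product $\langle x,y\rangle'$, which may be negative, whereas the target inequality is phrased through $|\langle x,y\rangle_\alpha|$; the step $\langle x,y\rangle'\le|\langle x,y\rangle'|$ is what converts the equality into the claimed one-sided estimate, and it is the only point where the real-case hypothesis is essential. Second, a consistency remark is needed in (iv): for the classical Bessel inequality to apply verbatim the sequence $\{e_i\}$ must be \emph{orthonormal}, so the word ``orthogonal'' in the statement should be read accordingly. Everything else reduces to consistently choosing the correct direction of the two sandwiching bounds, so no further obstacle is anticipated.
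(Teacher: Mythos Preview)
Your proposal is correct and follows essentially the same route as the paper: each item is obtained by sandwiching the fuzzy quantities via~\eqref{eq:fuzzyinerproduct-simple2} and Lemma~\ref{lemma:fuzzy-norm-bounds}, invoking the corresponding classical inequality for $(\langle\cdot,\cdot\rangle',\|\cdot\|')$, and converting back. Your side remarks on the sign step in (iii) and on reading ``orthogonal'' as ``orthonormal'' in (iv) are apt and make the argument slightly more careful than the paper's own write-up.
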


\begin{proof}
	
	(i)  
	From formula~\eqref{eq:fuzzyinerproduct-simple2} we obtain
	\begin{align}
		A_{\alpha}\|x\|^{\prime 2} &\le |\|x\|_{\alpha}^{2}|, \label{eq:fuzzy-ineq-x}
		\\[2mm]
		A_{\alpha}\|y\|^{\prime 2} &\le |\|y\|_{\alpha}^{2}|. \label{eq:fuzzy-ineq-y}		
	\end{align}
	Applying the classical Cauchy--Schwarz inequality to \(\langle x,y\rangle'\) and 
	using~\eqref{eq:fuzzyinerproduct-simple2}, we get
	\begin{equation}\label{eq:fuzzy-inner-ineq}
		|\langle x,y\rangle_{\alpha}|^{2}
		\le B_{\alpha}^{2} |\langle x,y\rangle'|^{2}
		\le B_{\alpha}^{2}\|x\|^{\prime 2}\|y\|^{\prime 2}.
	\end{equation}
	Combining~\eqref{eq:fuzzy-ineq-x}–\eqref{eq:fuzzy-inner-ineq} yields the desired estimate.\\
	(ii)
	By applying inequality~\eqref{eq:fuzzy-norm-bounds} twice to Proposition~\ref{prop:Parallelgram Rule}, we obtain the desired estimate.
	
	\begin{align*}
		\frac{2A_{\alpha}}{B_{\alpha}}\big(\big|\|x\|_{\alpha}^{2}\big| +\big| \|y\|_{\alpha}^{2}\big|\big)
		&\le
		2A_{\alpha}\big(\|x\|^{\prime 2}+\|y\|^{\prime 2}\big)
		\\
	&=A_{\alpha}\big(
	\|x+y\|^{\prime2}+\|x-y\|_{\alpha}^{\prime2}\big)
	\\
		&\le
		\big|\|x+y\|_{\alpha}^{2}\big|+\big|\|x-y\|_{\alpha}^{2}\big|
		\\
			&\le
		B_\alpha\big(\|x+y\|^{\prime2}+\|x-y\|^{\prime2}\big)
		\\
			&\le
			2B_\alpha\big(\|x\|^{\prime 2}+\|y\|^{\prime 2}\big)
		\\
		&\le
		\frac{2B_{\alpha}}{A_{\alpha}}
		\big(\|x\|_{\alpha}^{2} + \|y\|_{\alpha}^{2}\big).
	\end{align*}
	(iii)Theorem~\eqref{thm:PolarizationIdentity} immediately implies the following:
	\begin{equation}\label{eq:polarization-ineq}
		\|x+y\|^{2}
		\;\le\;
		4|\langle x,y\rangle|
		+
		\|x-y\|^{2},
		\end{equation}
	Combining the inequality~\eqref{eq:polarization-ineq} with inequality~\eqref{eq:fuzzyinerproduct-simple2}  produces

	which establishes the assertion.\\
	(iv)
	Using inequality~\eqref{ineq:Bessel's Inequality} and formula~\eqref{eq:fuzzyinerproduct-simple2}, we first note
	\begin{equation}\label{eq:fuzzy-series-ineq}
		\frac{1}{B_{\alpha}^{2}}
		\sum_{i=1}^{\infty} |\langle x,e_i\rangle_{\alpha}|^{2}
		\;\leq\;
		\sum_{i=1}^{\infty} |\langle x,e_i\rangle'|^{2}.
\end{equation}
	Comparing \eqref{eq:fuzzy-series-ineq}, \eqref{eq:fuzzy-ineq-x} and Bessel’s classical inequality yields the claim.
\end{proof}
In the next step, by proving the following proposition, we reveal properties such as the zero product property, as well as the quasi-linear and quasi-symmetric properties of fuzzy inner products:
\begin{proposition}\label{prop:fuzzy-inner-properties}
	
	If the hypotheses of Definition~\ref{def:Definitionfuzzyinnerproduct} are satisfied, the following relations hold:
	\begin{enumerate}
		\item \(\big\langle x, x\big\rangle_\alpha = 0\) if and only if \(x = 0\) ;
		\item \(\big\langle 0, y\big\rangle_\alpha = 0\) for all \(y\in X\) .\\
		In addition, if the hypotheses of Proposition~\eqref{prop:fuzzy-inequalities} are satisfied, then the following inequalities hold:

			\item 
			\[
			\frac{|k| A_\alpha}{B_\alpha} \, \big|\langle x, y \rangle_\alpha \big|
			\leq 
			\big|\langle k x, y \rangle_\alpha \big|
			\leq 
			\frac{|k| B_\alpha}{A_\alpha} \, \big|\langle x, y \rangle_\alpha \big|;
			\]
			
			\item 
			\[
			\frac{|k| A_\alpha}{B_\alpha} \, \big|\langle x, y \rangle_\alpha \big|
			\leq 
			\big|\langle x, k y \rangle_\alpha \big|
			\leq 
			\frac{|k| B_\alpha}{A_\alpha} \, \big|\langle x, y \rangle_\alpha \big|;
			\]
			
			\item 
			\[
			\frac{|k| A_\alpha}{B_\alpha} \, \big|\langle y, x \rangle_\alpha \big|
			\leq 
			\big|\langle k x, y \rangle_\alpha \big|
			\leq 
			\frac{|k| B_\alpha}{A_\alpha} \, \big|\langle y, x \rangle_\alpha \big|;
			\]
			
			\item 
			\[
			\frac{|k| A_\alpha}{B_\alpha} \, \big|\langle y, x \rangle_\alpha \big|
			\leq 
			\big|\langle x, |k| y \rangle_\alpha \big|
			\leq 
			\frac{|k| B_\alpha}{A_\alpha} \, \big|\langle y, x \rangle_\alpha \big|;
			\]
			
			\item 
			\[
			\frac{A_\alpha}{B_\alpha} \, \big|\langle x, k y \rangle_\alpha \big|
			\leq 
			\big|\langle k x, y \rangle_\alpha \big|
			\leq 
			\frac{B_\alpha}{A_\alpha} \, \big|\langle x, k y \rangle_\alpha \big|;
			\]
			
			\item 
			\[
			\frac{A_\alpha}{B_\alpha} \, \big|\langle k x, y \rangle_\alpha \big|
			\leq 
			\big|\langle x, k y \rangle_\alpha \big|
			\leq 
			\frac{B_\alpha}{A_\alpha} \, \big|\langle k x, y \rangle_\alpha \big|;
			\]
			
			\item 
			\[
			\frac{A_\alpha}{B_\alpha} \, \big|\langle k y, x \rangle_\alpha \big|
			\leq 
			\big|\langle k x, y \rangle_\alpha \big|
			\leq 
			\frac{B_\alpha}{A_\alpha} \, \big|\langle k y, x \rangle_\alpha \big|;
			\]
			
			\item 
			\[
			\frac{A_\alpha}{B_\alpha} \, \big|\langle k y, x \rangle_\alpha \big|
			\leq 
			\big|\langle x,  k y \rangle_\alpha \big|
			\leq 
			\frac{B_\alpha}{A_\alpha} \, \big|\langle k y, x \rangle_\alpha \big|;
			\]
			
			\item 
			\[
			\big|\langle k x + z, y \rangle_\alpha \big|
			\leq 
			\frac{B_\alpha}{A_\alpha} \Big( |k| \, \big|\langle x, y \rangle_\alpha \big| + \big|\langle z, y \rangle_\alpha \big| \Big);
			\]
			
			\item 
			\[
			\big|\langle x, k y + z \rangle_\alpha \big|
			\leq 
			\frac{B_\alpha}{A_\alpha} \Big( |k| \, \big|\langle x, y \rangle_\alpha \big| + \big|\langle x, z \rangle_\alpha \big| \Big).
			\]
		\end{enumerate}
		
\end{proposition}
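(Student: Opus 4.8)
The plan is to separate the two definiteness statements (1)--(2), which rely on the full two--inner--product form of Definition~\ref{def:Definitionfuzzyinnerproduct}, from the ten two-sided estimates (3)--(13), which all follow from the simplified sandwich bound~\eqref{eq:fuzzyinerproduct-simple2} by a single uniform argument. Throughout I would work under the standing membership hypothesis $\mathcal{K}(|\langle\cdot,\cdot\rangle_\alpha|)\ge\alpha$, which is precisely what activates the interval membership in~\eqref{eq:fuzzy_inner} and the bounds in~\eqref{eq:fuzzyinerproduct-simple2}.

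For (2) I would observe that $\langle 0,y\rangle' = \langle 0,y\rangle'' = 0$ by linearity of the classical inner products, so the defining ordered interval degenerates to $[0,0]_o = \{0\}$; the membership relation~\eqref{eq:fuzzy_inner} then forces $|\langle 0,y\rangle_\alpha| = 0$, hence $\langle 0,y\rangle_\alpha = 0$. The ``if'' direction of (1) is identical. For the ``only if'' direction I would argue from the location of $0$ inside the ordered interval: if $\langle x,x\rangle_\alpha = 0$, then $0 \in [A_\alpha|\langle x,x\rangle'|,\, B_\alpha|\langle x,x\rangle''|]_o$, and since $A_\alpha,B_\alpha>0$ both endpoints are nonnegative, so $0$ can lie in the interval only when its left endpoint $\min\{A_\alpha|\langle x,x\rangle'|,\,B_\alpha|\langle x,x\rangle''|\}$ equals $0$. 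Thus at least one of $\langle x,x\rangle'$, $\langle x,x\rangle''$ vanishes, and the definiteness of a genuine inner product (Definition~\ref{def:inner-product}(iv)) yields $x=0$.

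For (3)--(13) I would set up one mechanism. Apply~\eqref{eq:fuzzyinerproduct-simple2} to the fuzzy quantity on the left to bound it above by $B_\alpha$ times its classical magnitude and below by $A_\alpha$ times the same; rewrite that classical magnitude using the algebraic laws of $\langle\cdot,\cdot\rangle'$ — first--slot homogeneity $|\langle kx,y\rangle'| = |k|\,|\langle x,y\rangle'|$, second--slot conjugate homogeneity $|\langle x,ky\rangle'| = |k|\,|\langle x,y\rangle'|$, and Hermitian symmetry $|\langle y,x\rangle'| = |\langle x,y\rangle'|$ (Definition~\ref{def:inner-product}) — so that it matches the classical magnitude attached to the quantity on the right; then re--apply~\eqref{eq:fuzzyinerproduct-simple2} in reverse to the right-hand quantity, converting the classical magnitude back into $|\langle\cdot,\cdot\rangle_\alpha|$ and producing the factors $A_\alpha/B_\alpha$ and $B_\alpha/A_\alpha$. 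Concretely, in (3) this chains $|\langle kx,y\rangle_\alpha| \le B_\alpha|k|\,|\langle x,y\rangle'| \le (|k|B_\alpha/A_\alpha)\,|\langle x,y\rangle_\alpha|$ for the upper bound and symmetrically for the lower; items (4)--(6) are the same with one of the three classical identities inserted; items (7)--(10) differ only in that both sides are fuzzy, so the common classical magnitude cancels without the surviving factor $|k|$; and (11)--(12) additionally invoke additivity of $\langle\cdot,\cdot\rangle'$ with the triangle inequality, $|\langle kx+z,y\rangle'| \le |k|\,|\langle x,y\rangle'| + |\langle z,y\rangle'|$, before the same conversion.

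I expect the only genuinely delicate point to be the ``only if'' half of (1): every remaining item reduces to mechanical substitution into the sandwich~\eqref{eq:fuzzyinerproduct-simple2}, whereas part (1) requires reasoning about membership of $0$ in an ordered interval whose endpoints need not be ordered, which is exactly where the nonnegativity of $A_\alpha,B_\alpha$ and the degenerate case $[0,0]_o = \{0\}$ must be handled with care.
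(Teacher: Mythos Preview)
Your proposal is correct and, for items (3)--(12), follows exactly the paper's mechanism: apply the sandwich bound~\eqref{eq:fuzzyinerproduct-simple2} once to pass to the classical inner product, use linearity/Hermitian symmetry/additivity of $\langle\cdot,\cdot\rangle'$, then apply~\eqref{eq:fuzzyinerproduct-simple2} again in the reverse direction. For items (1)--(2) you are in fact more careful than the paper: you argue directly from the general ordered-interval form of Definition~\ref{def:Definitionfuzzyinnerproduct}, as the statement literally requires, whereas the paper's own proof silently invokes the simplified bound~\eqref{eq:fuzzyinerproduct-simple2} for those items as well.
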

\begin{proof}
	We prove each part separately.
	
	\noindent 1.\ It follows from Inequality~\eqref{eq:fuzzyinerproduct-simple2} that:
	
	\[
	x=0\Leftrightarrow \langle x, x\rangle^\prime=0\Leftrightarrow\langle x, x\rangle_\alpha=0.
	\]
	\noindent 2.From Inequality~\eqref{eq:fuzzyinerproduct-simple2}, it follows that for every $y \in X$ we have
	
	,\[\langle 0, y\rangle^\prime=0\Leftrightarrow \langle 0, y\rangle_\alpha=0.\]
	\noindent 3.First, we prove the inequality \( \big|\big\langle kx, y\rangle_\alpha\big| \leq \cfrac{B_\alpha}{A_\alpha}	\big|\big\langle x, ky\rangle_\alpha\big| \).
	\begin{align*}
		\big|\big\langle kx, y\rangle_\alpha&\big|\leq B_\alpha\big|\langle kx, y\rangle^\prime\big|=\abs{k}.B_\alpha\big|\langle x, y\rangle^\prime\big|\\ &\leq
		\cfrac{\abs{k}B_\alpha}{A_\alpha}	\big|\big\langle x, y\rangle_\alpha\big|.
	\end{align*}
	Now it is enough to prove the inequality \( \big|\big\langle kx, y\rangle_\alpha\big| \geq \cfrac{\abs{k}A_\alpha}{B_\alpha}	\big|\big\langle x, y\rangle_\alpha\big| \).
	\begin{align*}
		\big|\big\langle kx, y\rangle_\alpha\big|&\geq A_\alpha\big|\langle kx, y\rangle^\prime\big|=\abs{k}A_\alpha\big|\langle x, y\rangle^\prime\big|\\ &\geq
		\cfrac{\abs{k}A_\alpha}{B_\alpha}	\big|\big\langle x, y\rangle_\alpha\big|.
	\end{align*}
	This completes the proof. The proofs of the remaining parts of Sections 4–8 can be obtained in a similar manner by applying the same method as in Proof 3.\\
	\noindent
	9.To establish this part, one may employ the following relations: 
	\[
	\begin{aligned}
		\big|\langle kx+z, y\rangle_\alpha\big|
		&\leq B_\alpha \big(\abs{k}. |\langle x, y\rangle^\prime| + |\langle z, y\rangle^\prime| \big) \\
		&\leq \frac{B_\alpha}{A_\alpha} 
		\big( \abs{k}.|\langle x, y\rangle_\alpha| + |\langle z, y\rangle_\alpha| \big).
	\end{aligned}
	\]

\end{proof}
\begin{corollary}
	If the hypotheses of Proposition~\ref{prop:fuzzy-inequalities} hold, and if $\sup B_\alpha / A_\alpha < \infty$, then for every $\alpha \in (0,1]$ and $x, y, z \in X$, there exists a positive number $M \in [1, \infty)$ such that the following inequalities hold:

	\begin{enumerate}
		\item 
	\[
	\frac{|k| }{M} \, \big|\langle x, y \rangle_\alpha \big|
	\leq 
	\big|\langle k x, y \rangle_\alpha \big|
	\leq 
		|k|M \, \big|\langle x, y \rangle_\alpha \big|;
	\]
	
	\item 
	\[
\frac{|k| }{M} \, \big|\langle x, y \rangle_\alpha \big|
	\leq 
	\big|\langle x, k y \rangle_\alpha \big|
	\leq 
		|k|M \, \big|\langle x, y \rangle_\alpha \big|;
	\]
	
	\item 
	\[
\frac{|k| }{M} \, \big|\langle y, x \rangle_\alpha \big|
	\leq 
	\big|\langle k x, y \rangle_\alpha \big|
	\leq 
		|k|M \, \big|\langle y, x \rangle_\alpha \big|;
	\]
	
	\item 
	\[
\frac{|k| }{M} \, \big|\langle y, x \rangle_\alpha \big|
	\leq 
	\big|\langle x, k y \rangle_\alpha \big|
	\leq 
	|k|M \, \big|\langle y, x \rangle_\alpha \big|;
	\]
	
	\item 
	\[
	\frac{1 }{M} \, \, \big|\langle x, k y \rangle_\alpha \big|
	\leq 
	\big|\langle k x, y \rangle_\alpha \big|
	\leq 
	M \, \big|\langle x, k y \rangle_\alpha \big|;
	\]
	
	\item 
	\[
	\frac{1 }{M} \, \big|\langle k x, y \rangle_\alpha \big|
	\leq 
	\big|\langle x, k y \rangle_\alpha \big|
	\leq 
M \, \big|\langle k x, y \rangle_\alpha \big|;
	\]
	
	\item 
	\[
	\frac{1 }{M} \, \big|\langle k y, x \rangle_\alpha \big|
	\leq 
	\big|\langle k x, y \rangle_\alpha \big|
	\leq 
	M \, \big|\langle k y, x \rangle_\alpha \big|;
	\]
	
	\item 
	\[
		\frac{1 }{M} \, \big|\langle k y, x \rangle_\alpha \big|
	\leq 
	\big|\langle x,  k y \rangle_\alpha \big|
	\leq 
	M \, \big|\langle k y, x \rangle_\alpha \big|;
	\]
	
	\item 
	\[
	\big|\langle k x + z, y \rangle_\alpha \big|
	\leq 
	M \Big( |k| \, \big|\langle x, y \rangle_\alpha \big| + \big|\langle z, y \rangle_\alpha \big| \Big);
	\]
	
	\item 
	\[
	\big|\langle x, k y + z \rangle_\alpha \big|
	\leq 
M \Big( |k| \, \big|\langle x, y \rangle_\alpha \big| + \big|\langle x, z \rangle_\alpha \big| \Big).
	\]
	\end{enumerate}
\end{corollary}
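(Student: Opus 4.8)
The plan is to obtain each of the ten inequalities by uniformizing the $\alpha$-dependent ratios that appear in items 3--12 of Proposition~\ref{prop:fuzzy-inner-properties}. The single constant that does all the work is
\[
M \;=\; \sup_{\alpha \in (0,1]} \frac{B_\alpha}{A_\alpha},
\]
which is finite precisely because of the added hypothesis $\sup B_\alpha/A_\alpha < \infty$.

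First I would confirm that $M \in [1,\infty)$. The standing assumption $0 < A_\alpha \le B_\alpha < \infty$ forces $B_\alpha/A_\alpha \ge 1$ for every $\alpha$, so passing to the supremum gives $M \ge 1$, while finiteness is the hypothesis itself. This step also records the two pointwise estimates used throughout: for every $\alpha\in(0,1]$,
\[
\frac{B_\alpha}{A_\alpha} \le M
\qquad\text{and}\qquad
\frac{A_\alpha}{B_\alpha} \ge \frac{1}{M},
\]
the second being the reciprocal of the first.

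Next I would substitute these two bounds into the inequalities of Proposition~\ref{prop:fuzzy-inner-properties}. As a representative case, item~3 supplies $\frac{|k|A_\alpha}{B_\alpha}|\langle x,y\rangle_\alpha| \le |\langle kx,y\rangle_\alpha| \le \frac{|k|B_\alpha}{A_\alpha}|\langle x,y\rangle_\alpha|$; replacing $A_\alpha/B_\alpha$ by the smaller quantity $1/M$ on the left and $B_\alpha/A_\alpha$ by the larger quantity $M$ on the right yields exactly item~3 of the corollary. Items~4--8 have an identical coefficient structure (only the ordering and labelling of the vectors differ), so the same two-line replacement applies verbatim, and items~9--10, whose right-hand sides carry a single factor $B_\alpha/A_\alpha$, follow from the bound $B_\alpha/A_\alpha \le M$ alone.

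Since every step is merely a monotone substitution of a uniform constant for an $\alpha$-dependent ratio, no genuine computational obstacle arises. The only point deserving attention is conceptual rather than technical: $M$ must be chosen once, independently of $\alpha$, so that a single constant is valid across all level sets simultaneously. This is exactly the role of the finiteness assumption $\sup B_\alpha/A_\alpha < \infty$, without which the constants in the inequalities could degrade as $\alpha$ varies; securing this uniformity is therefore the sole nontrivial ingredient of the argument.
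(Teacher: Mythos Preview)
Your proposal is correct and follows exactly the paper's own argument: define $M=\sup_{\alpha\in(0,1]}B_\alpha/A_\alpha$ and substitute this uniform bound for the $\alpha$-dependent ratios $B_\alpha/A_\alpha$ and $A_\alpha/B_\alpha$ in items 3--12 of Proposition~\ref{prop:fuzzy-inner-properties}. (One minor bookkeeping slip: the displayed inequality you quote from item~3 of the proposition actually yields item~1 of the corollary, not item~3, since the arguments there are $\langle x,y\rangle_\alpha$ rather than $\langle y,x\rangle_\alpha$; the method and conclusions are unaffected.)
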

\begin{proof}
	Suppose that $M=\sup B_\alpha / A_\alpha$. Including this union in fields~3--12 of Proposition~\ref{prop:fuzzy-inner-properties} leads to the required results.
	
\end{proof}
\begin{proposition}If the hypotheses of Inequality~\eqref{eq:fuzzynorm-simple2} are satisfied, the following relation holds:
	\begin{enumerate}
		\item \(\big\vert\norm{x}_\alpha \big\vert= 0\) if and only if \(x = 0\);\\
		In addition, if the hypotheses of Proposition~\ref{prop:fuzzy-inequalities} are satisfied, the following inequalities hold:
			\item \(\Big\vert\norm{kx+y}_\alpha \Big\vert\leq \cfrac{D_\alpha}{C_\alpha}\Bigg(\abs{k}.\Big\vert\norm{x}_\alpha \Big\vert+\Big\vert \norm{y}_\alpha \Big\vert\Bigg) \);
		\item \(\cfrac{\abs{k}C_\alpha}{D_\alpha}\Big\vert\norm{x}_\alpha \Big\vert\leq \Big\vert\norm{kx}_\alpha \Big\vert\leq \cfrac{\abs{k}D_\alpha}{C_\alpha}\Big\vert \norm{x}_\alpha \Big\vert \).
	
	\end{enumerate}
	\begin{proof}
		\noindent 1.\ It follows from Inequality~\eqref{eq:fuzzyinerproduct-simple2} that:
		
		\[
		x=0\Leftrightarrow \norm{x}^\prime=0\Leftrightarrow  \norm{x}_\alpha=0.
		\]
		\noindent 2.Based on~\eqref{eq:fuzzynorm-simple2}, the conclusion is reached by following the sequence of steps presented below:
		
	\begin{align*}
		\Bigl|\|kx+y\|_{\alpha}\Bigr|
		&\le D_{\alpha}\|kx+y\|^\prime \\
		&\le D_{\alpha}\Bigl(|k|\|x\|^\prime+\|y\|^\prime\Bigr) \\
		&\le  \frac{D_{\alpha}}{C_{\alpha}}
		\left(|k|\,\Bigl|\|x\|_{\alpha}\Bigr|
		+ \Bigl|\|y\|_{\alpha}\Bigr|\right).
	\end{align*}
		\noindent 3.Using inequality~\eqref{eq:fuzzynorm-simple2}, the left-hand side of the inequality can be rewritten as
		\begin{align*}
			\frac{|k|C_\alpha}{D_\alpha}\,\bigl|\|x\|_\alpha\bigr|
			\le |k|C_\alpha\,\bigl|\|x'\|\bigr|
			\le \bigl|\|kx\|_\alpha\bigr|.
		\end{align*}
		The right-hand side of the inequality is obtained by setting $y=0$ and applying Parts~(1) and~(2) of the above proposition.

	\end{proof}
	\begin{corollary}
		If the hypotheses of Proposition~\ref{prop:fuzzy-inequalities} hold, and if $\inf C_\alpha / D_\alpha > 0$, then for every $\alpha \in (0,1]$ and $x, y, z \in X$, there exists a positive number $L \in [1, \infty)$ such that the following inequalities hold:
		\begin{enumerate}
			\item \(\cfrac{k}{L}\Big\vert  \norm{x}_\alpha\Big\vert\leq\Big\vert \norm{kx}_\alpha\Big\vert\leq kL\Big\vert \norm{x}_\alpha\Big\vert  \);
			\item \(\Big\vert\norm{kx+y}_\alpha\Big\vert\leq L\bigg(\abs{k}.\Big\vert\norm{x}_\alpha \Big\vert+\Big\vert\norm{y}_\alpha \Big\vert\bigg) \).
		\end{enumerate}
	\end{corollary}
	\begin{remark}
		The preceding results clarify the conceptual role of fuzziness in the present framework. 
		Although the fuzzy inner product is not linear in the classical sense, the validity of the Cauchy--Schwarz inequality, Bessel’s inequality, and the preservation of orthogonality demonstrate that the essential geometric core of inner product spaces is retained. 
		In particular, classical linearity is replaced here by a two-sided fuzzy boundedness condition. 
		Instead of exact linear equalities, operators and functionals satisfy controlled upper and lower fuzzy bounds, which are sufficient to recover the fundamental geometric inequalities and structural properties established above. 
		Classical linearity is thus recovered as a special case within this bounded framework.\\
		This observation highlights a key structural distinction from earlier fuzzy inner product models based on fixed $t$-norms, where either geometric inequalities fail or the inner product collapses to crisp values. 
		In contrast, the present framework allows genuinely fuzzy inner products while preserving the geometric backbone of Hilbert space theory. 
		Consequently, fuzziness should not be viewed as a relaxation or approximation of classical structures, but as an enrichment that admits multiple controlled realizations consistent with the same geometric constraints.
	\end{remark}
	In the following, we state propositions concerning the inequality between a fuzzy inner product and its corresponding norm, as well as the relationship between the inner product and the fuzzy norm structure.
	
\end{proposition}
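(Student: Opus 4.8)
The plan is to read off all three assertions directly from the two-sided sandwich estimate~\eqref{eq:fuzzynorm-simple2}, namely $C_\alpha\norm{x}'\le\abs{\norm{x}_\alpha}\le D_\alpha\norm{x}'$ with $0<C_\alpha\le D_\alpha<\infty$, combined with the classical norm axioms (absolute homogeneity and the triangle inequality) for $\norm{.}'$. The structure mirrors the inner-product computations of Proposition~\ref{prop:fuzzy-inner-properties}; the only care required is to track, at each step, which of the two bounds in~\eqref{eq:fuzzynorm-simple2} is being used, so that the intended ratio $D_\alpha/C_\alpha$ emerges rather than a weaker constant. For part~(1) I would exploit only the strict positivity of $C_\alpha$ and $D_\alpha$: if $\abs{\norm{x}_\alpha}=0$, the lower bound forces $C_\alpha\norm{x}'\le 0$, hence $\norm{x}'=0$ and $x=0$; conversely $x=0$ gives $\norm{x}'=0$, and the upper bound yields $\abs{\norm{x}_\alpha}\le D_\alpha\cdot 0=0$. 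This closes the chain $x=0\iff\norm{x}'=0\iff\abs{\norm{x}_\alpha}=0$.

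For part~(2) the idea is to pass to the classical norm through the upper bound, apply homogeneity and the triangle inequality for $\norm{.}'$, and then return to the fuzzy norm through the lower bound. Concretely,
\[
\abs{\norm{kx+y}_\alpha}\le D_\alpha\norm{kx+y}'\le D_\alpha\bigl(\abs{k}\,\norm{x}'+\norm{y}'\bigr)\le \frac{D_\alpha}{C_\alpha}\bigl(\abs{k}\,\abs{\norm{x}_\alpha}+\abs{\norm{y}_\alpha}\bigr),
\]
where the final step substitutes $\norm{x}'\le\abs{\norm{x}_\alpha}/C_\alpha$ and $\norm{y}'\le\abs{\norm{y}_\alpha}/C_\alpha$, again from~\eqref{eq:fuzzynorm-simple2}.

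Part~(3) splits into two one-sided estimates. The lower bound follows the same sandwich directly, using homogeneity and then $\norm{x}'\ge\abs{\norm{x}_\alpha}/D_\alpha$:
\[
\abs{\norm{kx}_\alpha}\ge C_\alpha\norm{kx}'=\abs{k}\,C_\alpha\norm{x}'\ge \frac{\abs{k}\,C_\alpha}{D_\alpha}\abs{\norm{x}_\alpha}.
\]
For the upper estimate the cleanest route is to specialize part~(2) to $y=0$ and invoke part~(1), so that $\abs{\norm{0}_\alpha}=0$; this immediately produces $\abs{\norm{kx}_\alpha}\le \frac{\abs{k}\,D_\alpha}{C_\alpha}\abs{\norm{x}_\alpha}$, avoiding a separate computation. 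I do not anticipate a genuine obstacle: every inequality is a one-line consequence of~\eqref{eq:fuzzynorm-simple2} together with the triangle inequality, and the only real bookkeeping is selecting the correct direction of the sandwich at each stage so as not to inflate the constant. The mild subtlety worth flagging is that homogeneity is applied to the \emph{classical} norm $\norm{.}'$ (not to $\norm{.}_\alpha$, which need not be homogeneous), which is exactly why the two-sided bounds in part~(3) are genuinely wider than the classical equality and collapse to it precisely when $C_\alpha=D_\alpha$.
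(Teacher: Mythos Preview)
Your proposal is correct and follows essentially the same route as the paper: all three parts are derived directly from the two-sided bound~\eqref{eq:fuzzynorm-simple2} together with the classical norm axioms for $\norm{\cdot}'$, with part~(2) proved by the upper-bound/triangle-inequality/lower-bound chain and the upper half of part~(3) obtained by setting $y=0$ in part~(2). Your write-up is in fact slightly more explicit than the paper's in part~(1), where you spell out both directions of the equivalence separately.
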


\begin{proposition}
	Suppose that $ \langle ., . \rangle_{\alpha}:(0,1]\times X\times X \to \mathbb{C}$ is a fuzzy inner product satisfying formula~\eqref{eq:fuzzyinerproduct-simple2}.  
	Furthermore, let $\mathcal{K}':\mathbb{R} \to [0,1]$ be a fuzzy number, and assume that 
	$ \langle ., . \rangle^\prime_{\alpha}:(0,1]\times X\times X \to \mathbb{C}$ is a fuzzy inner product corresponding to $\mathcal{K}'$ and satisfying the following formula:
	
	\begin{align}\label{def:fuzzy-inner-equivalence}		
		\begin{split}
			\mathcal{K}'\bigl(&\big|\langle x, y \rangle^\prime_\alpha \big|\bigr) \geq \alpha 
			\Leftrightarrow\\ 
			M_\alpha\, |\langle x,y\rangle^{\prime}|\leq \big|&\langle x, y \rangle^\prime_\alpha \big|
			\leq N_\alpha\, |\langle x,y\rangle^{\prime}|.
		\end{split}
	\end{align}
	Consequently, the following inequality holds:
	\begin{equation}\label{eq:fuzzy-inner-equivalence}
		\frac{A_{\alpha_1}}{N_{\alpha_2}}\, \big|\langle x, y \rangle^\prime_{\alpha_2} \big|
		\leq \big|\langle x, y \rangle_{\alpha_1} \big|
		\leq \frac{B_{\alpha_1}}{M_{\alpha_2}}\, \big|\langle x, y \rangle^\prime_{\alpha_2} \big|.
	\end{equation}
\end{proposition}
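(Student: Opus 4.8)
The plan is to use the common classical inner product $\langle\cdot,\cdot\rangle^{\prime}$ as a bridge between the two fuzzy inner products. Both hypotheses are calibrated against the \emph{same} quantity $|\langle x,y\rangle^{\prime}|$: formula~\eqref{eq:fuzzyinerproduct-simple2} controls $|\langle x,y\rangle_{\alpha_1}|$ from above and below by $A_{\alpha_1}|\langle x,y\rangle^{\prime}|$ and $B_{\alpha_1}|\langle x,y\rangle^{\prime}|$, while formula~\eqref{def:fuzzy-inner-equivalence} does the same for $|\langle x,y\rangle^{\prime}_{\alpha_2}|$ with constants $M_{\alpha_2}$ and $N_{\alpha_2}$. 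The idea is to eliminate the hidden pivot $|\langle x,y\rangle^{\prime}|$ by inverting one two-sided bound and substituting it into the other.

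Concretely, I would first fix $x,y\in X$ and $\alpha_1,\alpha_2\in(0,1]$, and assume both membership conditions $\mathcal{K}\bigl(|\langle x,y\rangle_{\alpha_1}|\bigr)\ge\alpha_1$ and $\mathcal{K}'\bigl(|\langle x,y\rangle^{\prime}_{\alpha_2}|\bigr)\ge\alpha_2$ hold, so that the simplified two-sided estimates are in force. From~\eqref{def:fuzzy-inner-equivalence}, dividing through by the strictly positive constants $M_{\alpha_2}$ and $N_{\alpha_2}$, I would extract the sandwich
\[
\frac{|\langle x,y\rangle^{\prime}_{\alpha_2}|}{N_{\alpha_2}}
\;\le\; |\langle x,y\rangle^{\prime}|
\;\le\; \frac{|\langle x,y\rangle^{\prime}_{\alpha_2}|}{M_{\alpha_2}}.
\]

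The next step is to substitute this sandwich into~\eqref{eq:fuzzyinerproduct-simple2}. For the upper estimate I would chain
\[
|\langle x,y\rangle_{\alpha_1}|
\;\le\; B_{\alpha_1}\,|\langle x,y\rangle^{\prime}|
\;\le\; \frac{B_{\alpha_1}}{M_{\alpha_2}}\,|\langle x,y\rangle^{\prime}_{\alpha_2}|,
\]
and for the lower estimate,
\[
|\langle x,y\rangle_{\alpha_1}|
\;\ge\; A_{\alpha_1}\,|\langle x,y\rangle^{\prime}|
\;\ge\; \frac{A_{\alpha_1}}{N_{\alpha_2}}\,|\langle x,y\rangle^{\prime}_{\alpha_2}|.
\]
Combining these two chains gives exactly inequality~\eqref{eq:fuzzy-inner-equivalence}.

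Since every constant $A_{\alpha_1}, B_{\alpha_1}, M_{\alpha_2}, N_{\alpha_2}$ is strictly positive and finite by hypothesis, all divisions are legitimate and no case distinctions on signs are required. I expect no genuine obstacle in the computation itself; the only point demanding care is the bookkeeping of the two independent level parameters $\alpha_1$ and $\alpha_2$, ensuring that the bound at level $\alpha_1$ is applied to the original fuzzy inner product and the bound at level $\alpha_2$ to the primed one, and that both membership conditions are invoked so that the simplified inequalities genuinely hold. The conceptual crux is simply the recognition that both fuzzy structures are anchored to the \emph{same} classical inner product, which is what makes the transitive substitution possible.
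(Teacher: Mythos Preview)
Your proposal is correct and follows essentially the same approach as the paper, which simply states that the inequality follows ``by comparing formulas~\eqref{eq:fuzzyinerproduct-simple2} and~\eqref{def:fuzzy-inner-equivalence}''. Your version merely makes explicit the pivot through the common classical quantity $|\langle x,y\rangle'|$ that the paper leaves implicit.
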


\begin{proof}
	By comparing formulas~\eqref{eq:fuzzyinerproduct-simple2} and~\eqref{def:fuzzy-inner-equivalence}, the inequality in~\eqref{eq:fuzzy-inner-equivalence} directly follows.
\end{proof}
\begin{corollary}
	Assume that $\langle ., . \rangle_{\alpha}:(0,1]\times X\times X \to \mathbb{C}$ is a fuzzy inner product satisfying formula~\eqref{eq:fuzzyinerproduct-simple2}. 
	Then, the following inequality holds:
	\begin{equation}
		\frac{A_{\alpha_1}}{B_{\alpha_2}} \, \big|\langle x, y \rangle_{\alpha_2} \big|
		\leq \big|\langle x, y \rangle_{\alpha_1} \big|
		\leq \frac{B_{\alpha_1}}{A_{\alpha_2}} \, \big|\langle x, y \rangle_{\alpha_2} \big|.
	\end{equation}
	
\end{corollary}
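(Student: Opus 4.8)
The plan is to recognize this corollary as the diagonal special case of the preceding proposition, obtained by letting the second fuzzy inner product coincide with the first. Concretely, I would take $\langle\cdot,\cdot\rangle^\prime_\alpha = \langle\cdot,\cdot\rangle_\alpha$, so that the auxiliary bound~\eqref{def:fuzzy-inner-equivalence} becomes~\eqref{eq:fuzzyinerproduct-simple2} itself with $M_\alpha = A_\alpha$ and $N_\alpha = B_\alpha$. Substituting these constants into the conclusion~\eqref{eq:fuzzy-inner-equivalence} of that proposition reproduces verbatim the claimed chain of inequalities. This is the shortest route and mirrors the one-line comparison argument already used to prove the proposition.

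For completeness I would also give the self-contained derivation, which makes transparent why the classical inner product $\langle x,y\rangle^\prime$ serves as the common bridge between the two levels: the fuzzy inner product is defined relative to a single fixed pair of classical inner products, so the same quantity $\langle x,y\rangle^\prime$ appears in the two-sided bound~\eqref{eq:fuzzyinerproduct-simple2} at \emph{every} level $\alpha$. First I would invoke~\eqref{eq:fuzzyinerproduct-simple2} at level $\alpha_2$ and invert the resulting estimate to obtain
\[
\frac{\big|\langle x,y\rangle_{\alpha_2}\big|}{B_{\alpha_2}} \;\le\; |\langle x,y\rangle^\prime| \;\le\; \frac{\big|\langle x,y\rangle_{\alpha_2}\big|}{A_{\alpha_2}}.
\]
Then I would apply~\eqref{eq:fuzzyinerproduct-simple2} again at level $\alpha_1$, namely $A_{\alpha_1}|\langle x,y\rangle^\prime| \le \big|\langle x,y\rangle_{\alpha_1}\big| \le B_{\alpha_1}|\langle x,y\rangle^\prime|$, and eliminate the intermediate quantity $|\langle x,y\rangle^\prime|$ using the displayed bounds. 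Feeding the lower estimate for $|\langle x,y\rangle^\prime|$ into the lower bound, and the upper estimate into the upper bound, yields the factors $A_{\alpha_1}/B_{\alpha_2}$ and $B_{\alpha_1}/A_{\alpha_2}$ respectively, which is precisely the assertion.

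The only point requiring care, and the sole potential obstacle, is the standing hypothesis underlying~\eqref{eq:fuzzyinerproduct-simple2}: the two-sided bound is available only when the membership condition $\mathcal{K}\bigl(\big|\langle x,y\rangle_\alpha\big|\bigr)\ge\alpha$ is in force. I would therefore note that for the fixed pair $(x,y)$ the estimate is being invoked at both levels $\alpha_1$ and $\alpha_2$, so one necessarily works within the regime in which these bounds hold simultaneously; once that is granted, the remainder is a routine chain of elementary inequalities. Since $0 < A_\alpha \le B_\alpha < \infty$ by hypothesis, every division is legitimate and the inequalities retain their direction throughout.
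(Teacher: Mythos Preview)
Your proposal is correct and follows exactly the route the paper intends: the corollary is the special case of the immediately preceding proposition obtained by taking $\langle\cdot,\cdot\rangle'_\alpha=\langle\cdot,\cdot\rangle_\alpha$ (hence $M_\alpha=A_\alpha$, $N_\alpha=B_\alpha$), and the paper in fact states it without a separate proof. Your additional self-contained derivation via the common classical inner product $|\langle x,y\rangle'|$ and your remark on the standing membership hypothesis are both accurate and go slightly beyond what the paper spells out.
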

Similarly, the next result provides a concise description of how two fuzzy norms 
corresponding to different fuzzy numbers are related, by employing a common 
underlying norm to derive upper and lower bounds.

\begin{proposition}
	Suppose that $\norm{.}_\alpha:(0,1]\times X \to \mathbb{C}$ is a fuzzy norm satisfying formula~\eqref{eq:fuzzynorm-simple1}.  
	Furthermore, let $\mathcal{K}':\mathbb{R} \to [0,1]$ be a fuzzy number, and assume that 
	$\norm{.}^\prime_\alpha:(0,1]\times X \to \mathbb{C}$ is a fuzzy norm corresponding to $\mathcal{K}'$ and satisfying the following formula:
	\begin{align}
		\begin{split}
	\mathcal{K}'(\big|&\norm{x}^\prime_\alpha\big|\bigr) \geq \alpha 
			\Leftrightarrow\\ 
			M_\alpha\, &\norm{x}^{\prime}\leq \big|\norm{x}^\prime_\alpha\big| 
			\leq N_\alpha\, \norm{x}^{\prime}.
		\end{split}
	\end{align}
	Consequently, the following inequality holds:
	\begin{equation}
		\frac{C_{\alpha_1}}{N_{\alpha_2}}\, \big|\norm{x}^\prime_{\alpha_2}\big|
		\leq \big|\norm{x}_{\alpha_1}\big|
		\leq \frac{D_{\alpha_1}}{M_{\alpha_2}}\, \big|\norm{x}^\prime_{\alpha_2}\big|.
	\end{equation}
\end{proposition}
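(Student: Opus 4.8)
The plan is to exploit the fact that both fuzzy norms, $\norm{.}_\alpha$ and $\norm{.}^\prime_\alpha$, are calibrated against the \emph{same} underlying classical norm $\norm{.}^\prime$. This common anchor is the crux of the argument: by isolating $\norm{x}^\prime$ from the defining bounds attached to $\norm{.}^\prime_\alpha$ and substituting the result into the defining bounds of $\norm{.}_\alpha$, the classical norm cancels out and a direct two-sided comparison between the two fuzzy quantities emerges, exactly paralleling the inner-product comparison recorded in~\eqref{eq:fuzzy-inner-equivalence}.

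Concretely, I would first record the two sandwich estimates guaranteed by the hypotheses. Working at level $\alpha_1$ with the first fuzzy norm and assuming its membership condition holds, formula~\eqref{eq:fuzzynorm-simple2} provides
\[
C_{\alpha_1}\norm{x}^\prime \le \abs{\norm{x}_{\alpha_1}} \le D_{\alpha_1}\norm{x}^\prime .
\]
Working at level $\alpha_2$ with the second fuzzy norm, the corresponding hypothesis yields
\[
M_{\alpha_2}\norm{x}^\prime \le \abs{\norm{x}^\prime_{\alpha_2}} \le N_{\alpha_2}\norm{x}^\prime .
\]
Since $0<M_{\alpha_2}\le N_{\alpha_2}<\infty$, the second chain can be inverted to bracket the common term as
\[
\frac{\abs{\norm{x}^\prime_{\alpha_2}}}{N_{\alpha_2}} \le \norm{x}^\prime \le \frac{\abs{\norm{x}^\prime_{\alpha_2}}}{M_{\alpha_2}} .
\]
Substituting the lower bound for $\norm{x}^\prime$ into the estimate $C_{\alpha_1}\norm{x}^\prime\le\abs{\norm{x}_{\alpha_1}}$, and the upper bound for $\norm{x}^\prime$ into $\abs{\norm{x}_{\alpha_1}}\le D_{\alpha_1}\norm{x}^\prime$, then delivers the claimed inequality
\[
\frac{C_{\alpha_1}}{N_{\alpha_2}}\,\abs{\norm{x}^\prime_{\alpha_2}} \le \abs{\norm{x}_{\alpha_1}} \le \frac{D_{\alpha_1}}{M_{\alpha_2}}\,\abs{\norm{x}^\prime_{\alpha_2}} .
\]

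I do not anticipate a genuine obstacle here, since the argument is merely a chaining of four one-sided inequalities in which all scaling constants are strictly positive, so dividing by $M_{\alpha_2}$ and $N_{\alpha_2}$ preserves the inequality directions. The only points that must be made explicit are that both membership hypotheses are in force, so that the sandwich forms rather than the raw ordered-interval formulations apply, and that the strict positivity of $M_{\alpha_2}$ is what legitimizes the inversion step; both are supplied by the standing assumptions $0<C_\alpha\le D_\alpha<\infty$ and $0<M_\alpha\le N_\alpha<\infty$. The elimination of the shared classical norm is the entire mechanism, and the proof is therefore a direct transcription of the inner-product case with norms in place of inner products.
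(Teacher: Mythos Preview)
Your proposal is correct and follows precisely the route the paper intends: the paper does not even write out a separate proof for this proposition, introducing it with ``Similarly'' after the inner-product version, whose proof reads in its entirety ``By comparing formulas~\eqref{eq:fuzzyinerproduct-simple2} and~\eqref{def:fuzzy-inner-equivalence}, the inequality in~\eqref{eq:fuzzy-inner-equivalence} directly follows.'' Your argument is exactly this comparison made explicit for norms---isolating the common classical norm $\|\cdot\|'$ from one sandwich and substituting into the other---so there is nothing to add.
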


\begin{corollary}
	Assume that $Q:(0,1]\times X \to \mathbb{C}$ is a fuzzy norm satisfying formula~\eqref{eq:fuzzynorm-simple1}. 
	Then, the following inequality holds:
	\begin{equation}
		\frac{C_{\alpha_1}}{D_{\alpha_2}}\, \big|\norm{x}_{\alpha_2}\big|
		\leq \big|\norm{x}_{\alpha_1}\big|
		\leq \frac{D_{\alpha_1}}{C_{\alpha_2}}\, \big|\norm{x}_{\alpha_2}\big|.
	\end{equation}
	
\end{corollary}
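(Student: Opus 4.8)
The plan is to recognize this statement as the diagonal special case of the preceding proposition, obtained by letting the auxiliary fuzzy norm $\norm{.}^\prime_\alpha$ coincide with the given fuzzy norm $\norm{.}_\alpha$ itself (so that $\mathcal{K}'=\mathcal{K}$, $M_\alpha=C_\alpha$, and $N_\alpha=D_\alpha$). Substituting these identifications into the conclusion of that proposition reproduces the claimed two-sided bound at once. For transparency, I would also record a short self-contained argument straight from the simplified membership bound~\eqref{eq:fuzzynorm-simple2}, which exposes the underlying mechanism without having to introduce a second fuzzy number at all.

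For this direct route, I would first apply~\eqref{eq:fuzzynorm-simple2} at the two levels $\alpha_1$ and $\alpha_2$ separately, each time against the common underlying classical norm $\|\cdot\|'$. This yields the two sandwich estimates $C_{\alpha_1}\|x\|'\le\big|\norm{x}_{\alpha_1}\big|\le D_{\alpha_1}\|x\|'$ and $C_{\alpha_2}\|x\|'\le\big|\norm{x}_{\alpha_2}\big|\le D_{\alpha_2}\|x\|'$, valid for every $x\in X$. The key step is to eliminate the classical quantity $\|x\|'$, which serves as a pivot shared by both levels: solving the $\alpha_2$ estimate for $\|x\|'$ gives $\big|\norm{x}_{\alpha_2}\big|/D_{\alpha_2}\le\|x\|'\le\big|\norm{x}_{\alpha_2}\big|/C_{\alpha_2}$, where the division is legitimate precisely because $0<C_{\alpha_2}\le D_{\alpha_2}<\infty$.

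Finally, I would feed these bounds on $\|x\|'$ back into the $\alpha_1$ estimate: combining $\big|\norm{x}_{\alpha_1}\big|\ge C_{\alpha_1}\|x\|'$ with $\|x\|'\ge\big|\norm{x}_{\alpha_2}\big|/D_{\alpha_2}$ produces the left inequality $\tfrac{C_{\alpha_1}}{D_{\alpha_2}}\big|\norm{x}_{\alpha_2}\big|\le\big|\norm{x}_{\alpha_1}\big|$, and combining $\big|\norm{x}_{\alpha_1}\big|\le D_{\alpha_1}\|x\|'$ with $\|x\|'\le\big|\norm{x}_{\alpha_2}\big|/C_{\alpha_2}$ produces the right inequality. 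There is no genuine obstacle here; the only point deserving attention is confirming that the positivity constraint $0<C_\alpha\le D_\alpha<\infty$ holds at both levels, so that every reciprocal is well defined and no inequality reverses direction. The argument is entirely parallel to the fuzzy inner-product corollary established earlier, with the pair $(C_\alpha,D_\alpha)$ playing the role of $(A_\alpha,B_\alpha)$ and the classical norm $\|\cdot\|'$ playing the role of $|\langle x,y\rangle'|$.
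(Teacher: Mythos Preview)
Your proposal is correct and matches the paper's approach exactly: the paper states this corollary immediately after the preceding proposition without a separate proof, so the intended argument is precisely the specialization $\norm{.}^\prime_\alpha=\norm{.}_\alpha$, $\mathcal{K}'=\mathcal{K}$, $M_\alpha=C_\alpha$, $N_\alpha=D_\alpha$ that you identify. Your additional self-contained derivation from~\eqref{eq:fuzzynorm-simple2} is also correct and mirrors the paper's treatment of the analogous inner-product corollary.
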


\section{Conclusions and future research directions}
\label{sec:conclusions}
In this paper, we have made substantial progress toward resolving a long-standing open problem in fuzzy functional analysis: the apparent impossibility of constructing genuine, non-degenerate fuzzy inner products that preserve the essential geometric structure of classical Hilbert spaces. The crucial innovation is the introduction of \emph{ordered intervals}---a new class of  numbers equipped with fully cancellative addition and a natural weak partial order---thereby replacing the inherently non-cancellative standard interval and fuzzy arithmetics.
Using this ordered interval arithmetic, we proposed a new family of fuzzy inner products and induced fuzzy norms that are
\begin{itemize}
	\item  quasi-linear, with  explicit upper and lower bounds whenever exact equality is unavailable due to the absence of unrestricted cancellation,

	\item quasi-symmetric in an appropriate dual-bound sense, and
	\item geometrically rich: the Cauchy--Schwarz and Bessel inequalities hold in their full classical strength, while the parallelogram law and the polarization identity naturally appear as  inequalities.
\end{itemize}
From a conceptual viewpoint, the proposed formulation extends the classical notion of an inner product by associating to each ordered triple $(\alpha, x, y)$ a complex- (or real-) valued function that precisely encodes the interaction between the vectors $x$ and $y$ at the membership level $\alpha$. Similarly, the induced fuzzy norm assigns to each pair $(\alpha, x)$ a complex-valued function reflecting the graded magnitude of $x$ at level $\alpha$. Because these functions rigorously satisfy the membership constraints induced by the parameter $\alpha$, both the inner product and the norm admit a natural and transparent interpretation as \emph{graded, membership-level structures} that are fully consistent with the fuzzy environment—yet remain analytically tractable in a way that most previous fuzzy extensions are not.\\
A particularly noteworthy property of the proposed construction is the \emph{complete recovery of classical orthogonality}: two vectors $x$ and $y$ satisfy $\langle x, y \rangle = 0$  if and only if $\langle x, y \rangle_\alpha = 0$ for every $\alpha \in [0,1]$.
	\textbf{Most importantly}, the ordered interval-valued inner product introduced in this paper constitutes a \emph{genuine and faithful extension} of the classical inner product. It strictly includes the classical real- or complex-valued inner product as a degenerate subcase, generates truly non-degenerate fuzzy values, and fully recovers---or even reinforces via sharp upper and lower bounds---all fundamental algebraic and geometric properties of Hilbert spaces.\\
A striking illustration of this reinforcement is the following two-sided inequality, which holds for all vectors $x,y$ and scalar $k$ satisfying the natural ordering in the ordered interval structure:

	\[
\frac{|k| A_\alpha}{B_\alpha} \, \big|\langle y, x \rangle_\alpha \big|
\leq 
\big|\langle k x, y \rangle_\alpha \big|
\leq 
\frac{|k| B_\alpha}{A_\alpha} \, \big|\langle y, x \rangle_\alpha \big|.
\]
This sharp, explicit bound simultaneously generalises and strengthens the corresponding monotonicity and scaling properties of the classical inner product, which are usually taken for granted without explicit two-sided estimates. In sharp contrast to virtually all previous fuzzy inner product definitions---which achieved non-degeneracy only by sacrificing core Hilbert-space structure---the present construction is the first to deliver genuine fuzziness while preserving (and frequently enhancing) the richness of the classical theory.\\
Several natural and promising directions for future research arise from this work:
\begin{itemize}
	\item \textbf{Fuzzy orthogonality and orthonormal systems.} Capitalise on the exact recovery of classical orthogonality to construct orthonormal bases (in separable and non-separable settings), develop orthogonal projections, establish a Riesz representation theorem, and investigate frame theory with genuinely fuzzy frame bounds in ordered interval Hilbert spaces.
	
	\item \textbf{Spectral theory.} Study bounded, self-adjoint, normal, and compact operators on ordered interval Hilbert spaces and develop a comprehensive spectral theory.
	
	\item \textbf{Completeness and classification.} Determine completeness criteria for the induced fuzzy norm and fully characterise complete ordered interval Hilbert and Banach spaces.
	
	\item \textbf{Applications.} Explore applications in fuzzy quantum mechanics, fuzzy partial differential equations, similarity-based reasoning, and optimisation under severe (Knightian) uncertainty.
\end{itemize}
We believe that the ordered interval framework presented in this paper provides a solid, coherent, and geometrically faithful foundation for fuzzy functional analysis, finally overcoming limitations that have persisted for more than three decades.
Furthermore, ordered intervals themselves form a novel algebraic and ordered structure deserving independent investigation. Beyond the real ordered intervals considered here, natural generalisation include  \emph{complex ordered intervals} (with values in $\mathbb{C}$). Each variant exhibits distinctive arithmetic and order-theoretic properties with potential applications in discrete uncertainty modelling, complex-valued uncertain systems, and lattice-based approaches to uncertainty. A systematic study of this generalised structure is expected to yield new insights into ordered algebraic systems and their applications in mathematics and engineering.

\bibliographystyle{elsarticle-num}
\bibliography{references}
\end{document}